\newcommand{\Rmnum}[1]{\expandafter\@slowromancap\romannumeral #1@}
\providecommand{\U}[1]{\protect \rule{.1in}{.1in}}
\newtheorem{theorem}{Theorem}[section]
\newtheorem{corollary}{Corollary}[section]
\newtheorem{definition}{Definition}[section]
\newtheorem{lemma}{Lemma}[section]
\newtheorem{proposition}{Proposition}[section]
\newtheorem{remark}{Remark}[section]
\def\argsup{\mathop{\rm argsup}}
\def\arginf{\mathop{\rm arginf}}
\def\sup{\mathop{\rm sup}}
\begin{document}

\title{Value existence for zero-sum ergodic stochastic differential games\footnotemark[1]
}

\author{
Juan Li$^{1,2}$, Wenqiang Li$^{1}$, Yanwei Li$^{1}$, Huaizhong Zhao$^{2,3}$\\
{\small$^{1}$ School of Mathematics
and Statistics, Shandong University, Weihai,}\\
{\small Weihai 264209, P. R. China.}\\
{\small$^{2}$ Research Center for Mathematics and Interdisciplinary Sciences, Shandong University,}\\
{\small Qingdao 266237, P.R. China.}\\
{\small$^{3}$ Department of Mathematical Sciences, Durham University, DH1 3LE, UK.}\\
{\small \textit{E-mails: juanli@sdu.edu.cn; wenqiangli@sdu.edu.cn; 201916496@mail.sdu.edu.cn; huaizhong.zhao@durham.ac.uk}}\\
}
\footnotetext[1]{
Juan Li is supported by the NSF of P.R. China (No. 12031009, 11871037), National Key R and D Program of China (No. 2018YFA0703900), and also supported by NSFC-RS (No. 11661130148; NA150344). Wenqiang Li is supported by the China Scholarship Council, the NSF of P.R. China (No. 12101537, 12271304) and the NSF of Shandong Province (No. ZR2025MS45). Huaizhong Zhao is supported by the EPSRC (ref. EP/S005293/2).
}
\footnotetext[2]{Corresponding authors: Wenqiang Li and Yanwei Li.}

\date{\today}
\maketitle


\bigskip
\noindent \textbf{Abstract.}
In this paper we investigate two-player zero-sum stochastic differential games with an ergodic payoff, in which the diffusion coefficient does not need to be non-degenerate. We first establish  the existence of a viscosity solution to the associated ergodic Hamilton-Jacobi-Bellman-Isaacs equation under a dissipativity condition. With the help of this viscosity solution, we then derive estimates for the upper and the lower ergodic value functions by constructing a series of non-degenerate approximating processes combined with the sup- and inf-convolution techniques. Finally, we prove the existence of a value for the game under the Isaacs condition and provide its representation formulae. As an application, we study the pollution accumulation problem with a long-run average social welfare to illustrate our theoretical results.
\bigskip

\noindent \textbf{Keywords.} Stochastic differential games; ergodic payoff; long time behaviour; viscosity solutions; social welfare.

\section{{\protect \large {Introduction}}}

Stochastic differential games (SDGs) have long been a central topic in applied mathematics and control theory, offering a robust framework for modeling strategic interactions in dynamic systems subject to random disturbances. In particular, two-player zero-sum SDGs with ergodic (long-run average) payoff criteria are of significant interest in economics, finance, and engineering, where the objective is to optimize the average performance over an infinite time horizon. Such problems naturally lead to the study of associated ergodic Hamilton-Jacobi-Bellman-Isaacs (HJBI) equations, which are fully nonlinear second-order partial differential equations (PDEs) whose solutions characterize the value of the game and optimal strategies. 

We consider a type of two-player zero-sum SDGs, of which the dynamics is driven by a doubly controlled stochastic differential equation (SDE)
 \begin{equation}\label{dynamics}
\left\{
\begin{array}{l}
dX_s^{x,u,v}=b(X_s^{x,u,v},u_s,v_s)ds+\sigma(X_s^{x,u,v},u_s,v_s)dB_s,\ s\geq 0,\\
X_0^{x,u,v}=x,\ x\in\mathbb{R}^n,
\end{array}
\right.
\end{equation}
where $u$ and $v$ represent admissible controls for Player 1 and Player 2, respectively, $(0,x)$ is the initial time and position.
The payoff functional of our SDGs is an ergodic one, namely the large time $T$ limit of average payoff of the following form
\begin{equation}\label{equ 2.2}
J(T,x,u,v)=\frac{1}{T}E[\int_0^Tf(X_s^{x,u,v},u_s,v_s)ds].
\end{equation}
The assumptions on the coefficients $b,\sigma$ and $f$ will be given in Section 2. The objective is to study the existence of the value of  such a game when $T$ tends to infinity, i.e. the consistency of the upper ergodic value function $\rho^+(x)$ and the lower ergodic value function $\rho^-(x)$ (see \eqref{equ 2.3} for the definitions of $\rho^+$ and $\rho^-$). Moreover, both of these two functions should be independent of the initial position $x$.

The study of zero-sum SDGs with two-players, with the dynamics given by controlled SDE \eqref{dynamics}, and the payoff functional $J(T,x,u,v)$ for some fixed terminal time $T$, has been rapidly developed since the pioneering work of Fleming and Souganidis \cite{FS1989}. They proved that the lower and upper value functions satisfy the dynamic programming principle (DPP) and are the unique viscosity solutions of the associated Bellman-Isaacs equations. Under the so called Isaacs condition, these two value functions coincide which implies that the value exists. Many results on SDGs were obtained such as that with recursive payoffs by Buckdahn, Li \cite{BL2008} and Li, Wei \cite{LW2015}, with jumps by Biswas \cite{B2012} and Buckdahn, Hu, Li \cite{BHL2011}, with reflection by Buckdahn, Li  \cite{BL2011}, with asymmetric information by Cardaliaguet, Rainer \cite{CR2009},  without the Isaacs condition by Buckdahn, Li, Quincampoix \cite{BLQ20132} and Li, Li \cite{LL2017}, etc. It is noted that all these works focused on finite time horizon SDGs.

For infinite horizon SDGs, two payoff criteria, namely the discounted payoff criterion and the ergodic one, are usually considered. In particular, Borkar, Ghosh \cite{BG1992} studied both of these two criteria based on occupation measures and considered relaxed feedback controls  to ensure the existence of the invariant measure of state processes. Arapostathis, Borkar, Kumar \cite{ABK2013} introduced a relative value iteration scheme to study game problems similar to \cite{BG1992}. Notice that in both \cite{BG1992} and \cite{ABK2013}, the diffusion coefficients need to be non-degenerate and independent of the controls.
Alvarez, Bardi \cite{AB2007} proved the existence of the values of SDGs with the ergodic payoff via the long time limit of value functions of the corresponding finite horizon games that are characterised in terms of the related Hamilton-Jacobi-Isaacs equation.  However, in their work, the diffusion coefficient of the dynamics also needs to be non-degenerate, and the coefficients of the dynamics as well as the performance function of ergodic payoff need to be periodic with respect to the state variables.

How to study infinite time horizon SDGs without a non-degenerate assumption in the ergodic settings (i.e., the value independent of the initial condition) is one of the main challenges in the differential game theory (see Section 3,  Buckdahn, Cardaliaguet and Quincampoix  \cite{BCM2011} for more details). Very little is known without such an assumption for infinite time horizon SDGs under the ergodic settings. However, there have been some works on stochastic control problems and SDGs under non-ergodic settings, i.e., the value of the game may  depend upon the initial condition. We refer to Buckdahn, Goreac, Quincampoix \cite{BGQ2014}, Li, Zhao \cite{LZ2019}, Quincampoix, Renault \cite{QR2011}, Buckdahn, Li, Quincampoix, Renault \cite{BLQR2020} for control problems  and Buckdahn, Li, Zhao \cite{BLZ2018} for
game problems. In order to study the existence of asymptotic values, in these works certain suitable non-expansivity conditions, instead of the non-degeneracy condition, are given. In contrast to these results,
our work focuses on the ergodic setting for SDGs, while the non-degeneracy condition is no longer required.

The classical PDEs techniques (see \cite{FS1989}) used to prove the existence of the values in finite time SDGs  may fail to work for infinite time SDGs. As a result, we use some idea of \'Swiech \cite{S1996} to study our problem. The basic framework is that we first study the existence of the solution of the related PDEs and then give the estimates for the upper and lower ergodic value functions in terms of this solution, from which we conclude the existence of the value for  SDGs under the  Isaacs condition. Compared with the traditional way introduced in \cite{FS1989}, this approach in some sense is a reverse process. It was first introduced by \'Swiech to obtain the sub- and super-optimality principles of DPP for deterministic differential games in \cite{S1996a} and SDGs in \cite{S1996} with discounted payoff functionals.
In the following, we briefly introduce the framework and explain our main contributions.

We first study a type of \emph{ergodic} HJBI equations (see \eqref{pde1} and \eqref{pde2} for their forms), corresponding to SDGs. The existence of viscosity solutions of these equations is obtained by vanishing the discounted factor limit of the associated PDE (Theorem \ref{th3.1}). As a byproduct, we show that the long-run average  solution of the corresponding finite horizon nonlinear (possibly degenerate) second order HJBI equation converges to a scalar, which is exactly the component of the solution of the associated ergodic HJBI equation.
Similar convergence result has been obtained by Alvarez, Bardi \cite{AB2003} under the  non-degenerate condition for the uncontrolled (i.e., no players) PDEs, by Cosso, Fuhrman, Pham \cite{CFP2016} under the assumption of the existence of the related invariant measure for the single controlled, i.e., only one player, PDEs. All these assumptions are not required here and our approach is different from the PDE technique used in \cite{AB2003} and the backward SDE tool exploited in \cite{CFP2016}.

As shown in \cite{AB2007,ABK2013,BG1992}, the non-degeneracy condition ensures the ergodic property of the diffusion process and plays an important role in the study of infinite horizon SDGs. In order to overcome the absence of the non-degeneracy assumption, we construct a series of non-degenerate diffusion processes, which are different from those constructed in \cite{S1996}, to approximate the state process (Lemma \ref{le1}). Notice that we consider an infinite time horizon problem,
the classical Gaussian's upper bounds for the density functions of non-degenerate (non-)Markovian processes cannot apply to our problem (see Remarks \ref{re4.1} and \ref{re-Krylov estimate} for more details). For this, we prove a weaker upper bound for density functions using a purely probabilistic approach (Proposition \ref{le2}). This plays an important role for the study of the SDGs.  Using the sup-convolution technique, we construct a series of functions smoothing enough to approximate the viscosity solution of the corresponding ergodic HJBI equations.
With the help of the weaker upper bound estimate and the approximation approach of the viscosity solution,  we give estimates for the upper ergodic value function $\rho^+(x)$ and the lower ergodic value function $\rho^-(x)$ in terms of viscosity solutions of equations $\eqref{pde1}$ and $\eqref{pde2}$, respectively (Theorem \ref{th1}). These estimates allow us to conclude the existence of the value of SDGs with ergodic payoff under the Isaacs condition (Theorem \ref{th2}) and the uniqueness result for the corresponding ergodic HJBI equation (Corollary \ref{co1}). As a byproduct, we obtain the related DPP and several representation results for the ergodic value (Theorem \ref{character1}, Theorem \ref{th3}).

Although the proof of Theorem 4.1 follows a framework similar to that of Theorem 2.1 in \cite{S1996}, it is worth  pointing out that there will be some difficulties in dealing with the ergodic payoff instead of the discounted one. The main technical difficulty is that Krylov's estimate about the distribution of a stochastic integral used in \cite{S1996} may not hold in our ergodic situation (see Remark \ref{re-Krylov estimate} for details). For this, we introduce a purely probabilistic approach (Proposition 4.1). The advantage of this approach is that it allows both players to consider  admissible controls over the original probability space rather than in an enlarged probability space that was considered in  \cite{S1996}.  We believe that this will be easier for players to choose their optimal controls in a smaller admissible control set, and it may facilitate the possibility of numerical computations.
It is worth  noting that Proposition 4.1 has its own interest, especially in the study of the upper bound of the fundamental solutions of stochastic PDEs on \emph{infinite time horizon}.
In addition, we stress that the assumptions of the coefficients we pose here are  weaker than those needed in \cite{S1996}. This certainly creates some  difficulties.
Finally, we mention that deterministic differential games with ergodic payoff have been studied by Ghosh, Rao \cite{GR2005}, in which most of the difficulties will vanish in the deterministic situation from the results of this paper.

Let us highlight the main innovations and contributions of our work:\\
1) Existence of viscosity solutions for degenerate ergodic HJBI equations: Using a vanishing discount factor approach, we establish the existence of a viscosity solution to the ergodic HJBI equation, without assuming non-degeneracy or the existence of an invariant measure.\\
2) Novel approximation techniques: To overcome the absence of non-degeneracy, we construct a family of non-degenerate approximating processes via linear SDEs driven by an extended Brownian motion. This differs fundamentally from earlier approximation schemes and allows us to maintain admissible controls adapted to the original filtration.\\
3) Probabilistic upper bounds for density functions: Since classical Gaussian-type bounds for fundamental solutions do not apply in our infinite-horizon, non-Markovian setting, we develop a purely probabilistic method to derive weaker upper bounds for the density functions of the approximating processes. This result has its own importance for studying parabolic equations with stochastic coefficients over infinite time horizons.\\
4) Convergence and representation results: By combining sup- and inf-convolution techniques with the density estimates, we derive precise estimates for the upper and lower ergodic value functions in terms of the viscosity solutions. This enables us to prove the existence of a game value under the Isaacs condition and to obtain representation formulas and a DPP for the ergodic problem.\\
5) Application to environmental economics: We illustrate the theoretical framework through a pollution accumulation model with long-run average social welfare, where the decay rate of pollution is uncertain and controls are non-Markovian. Our results provide explicit optimal policies without imposing additional non-degeneracy or stability conditions required in prior studies.

Let us also emphasise the main difficulties and technical challenges we have had to address.\\
1) Handling the infinite time horizon and ergodic payoff, which complicate the direct application of finite-horizon techniques such as Krylov's estimates for the distribution of stochastic integrals.\\
2) Managing the lack of non-degeneracy and the resulting absence of classical Gaussian bounds for transition densities.\\
3) Preserving the original probabilistic setting where controls are adapted only to the Brownian motion driving the state process, rather than an enlarged filtration, which is crucial for practical interpretability and numerical feasibility.\\
4) Establishing convergence and comparison results for viscosity solutions in a degenerate, fully nonlinear context without relying on invariant measures or strong regularity assumptions.

In summary, this paper provides a comprehensive analysis of ergodic SDGs with degenerate diffusions, introducing new analytical and probabilistic tools to address previously unresolved challenges. The results not only advance the theoretical understanding of long-run stochastic games but also broaden their applicability to realistic models in economics and environmental policy.

The paper is organized as follows: Section 2 formulates the SDGs framework and states main assumptions. Section 3 establishes the existence of viscosity solutions for ergodic HJBI equations. Section 4 proves the existence of the game value, uniqueness of the ergodic solutions, and related representation theorems. To illustrate our results, an example on a pollution accumulation problem with a long-run average social welfare is discussed in Section 5. The property of sup-convolution and inf-convolution used in the viscosity solution theory is given in the Appendix.

\section{ {\protect \large Formulation of the SDG setting}}

Let $B$ be a standard $d$-dimensional Brownian motion defined on a complete  probability space $(\Omega, \mathcal{F}, P)$, and we denote by $\mathbb{F}^B=(\mathcal{F}^B_t)_{t\geq 0}$ the complete filtration generated by  $(B_t)_{t\geq 0}$.
Let $U$ and $V$ be two compact metric spaces which represent the control state spaces for
Player 1 and Player 2, respectively. We introduce the admissible control spaces for players 1 and 2, respectively,
\begin{equation}\label{040801}
\begin{aligned}
&\mathcal{U}=\{u|u\ \text{is\ a}\ U\text{-valued\ and}\  \mathbb{F}^B\text{-adapted\ process}\};\\
&\mathcal{V}=\{v|v\ \text{is\ a}\ V\text{-valued\ and}\ \mathbb{F}^B\text{-adapted\ process}\}.
\end{aligned}
\end{equation}
We assume that the mappings
$$b:\mathbb{R}^n\times U\times V\rightarrow \mathbb{R}^n,\ \sigma:\mathbb{R}^n\times U\times V\rightarrow \mathbb{R}^{n\times d},\ f:\mathbb{R}^n\times U\times V\rightarrow \mathbb{R},$$
satisfy the following standard conditions
\begin{description}

\item[\textbf{(H1)}]\ For every fixed $x\in\mathbb{R}^n$, $b,\sigma,f$ are  continuous in $(u,v)\in U\times V$;

\item[\textbf{(H2)}]\ For $l\in\{b,f\}$ and $\sigma$, there exist constants $C_l$ and $C_\sigma$ such that, for all $x,y\in\mathbb{R}^n$, $u\in U$, $v\in V$,
$$|l(x,u,v)-l(y,u,v)|\leq C_l|x-y|,\ \|\sigma(x,u,v)-\sigma(y,u,v)\|\leq C_\sigma|x-y|.
$$
\end{description}
{Herein,  $\|\cdot\|$  stands for the trace norm of the matrix, and  $|\cdot|$ denotes the usual norm for the vector in Euclidean space.}
Under conditions (H1) and (H2), it is easy to check that there exists a constant $C>0$, such that, for all $(u,v)\in\times U\times V$,
$$|b(x,u,v)|+\|\sigma(x,u,v)\|+|f(x,u,v)|\leq C(1+|x|).$$
Then, for any given $x\in\mathbb{R}^n$ and every pair of admissible controls $(u,v)\in\mathcal{U}\times\mathcal{V}$,  SDE \eqref{dynamics} has a unique $\mathbb{R}^n$-valued, $\mathbb{F}^B$-adapted continuous solution $(X_t^{x,u,v})_{t\geq 0}$.
Moreover, we impose the following monotonicity condition for the coefficient $b$.

\begin{description}
\item[\textbf{(H3)}]\ There exists a constant $K>(C_\sigma)^2$ such that, for all $(u,v)\in U\times V$, $x,y\in\mathbb{R}^n$,
 $$2\langle x-y, b(x,u,v)-b(y,u,v)\rangle\leq -K|x-y|^2.$$
\end{description}
\begin{remark}
The assumptions {\rm (H2)} and {\rm (H3)} imply the classical dissipativity condition, i.e., for all $(u,v)\in U\times V$, $x,y\in\mathbb{R}^n$,
 $$2\langle x-y, b(x,u,v)-b(y,u,v)\rangle+\|\sigma(x,u,v)-\sigma(y,u,v)\|^2\leq -(K-C_\sigma^2)|x-y|^2.$$
It is well known that this condition will ensure the existence and uniqueness of the invariant measure of flows generated by the solution of SDE \eqref{dynamics} when both of the coefficients $b,\sigma$ are independent of the controls $u,v$ or  $u,v$ are feedback controls (see, e.g. Proposition 2.1 in \cite{CFP2016} or Theorem 6.3.2 in \cite{PZ1996}). However, the corresponding  transition probabilities of the state process $X^{x,u,v}$ are actually not time-homogenous due to the presence of the controls $(u,v)$ in our framework. Thus, neither the classical theory of ergodicity for homogeneous Markov semigroups nor recent developments of the ergodic theory of time-periodic Markov semigroups in \cite{FZ2020} may apply immediately to our problem.
 \end{remark}

 From standard estimates  for the solution of SDE \eqref{dynamics}, we can get the following result.
  \begin{lemma}\label{estimates}
Under assumptions {\rm (H1)-(H3)},
there exist  constants $C$, $c>0$ ($c
\neq K$) such that for all $t>0,\ \delta>0$, $(u,v)\in\mathcal{U}\times\mathcal{V}$, $x$, $y\in\mathbb{R}^n$, we have the following estimates,
\begin{equation}\label{051401}
\begin{aligned}
&E|X_t^{x,u,v}|^2\leq C(1+|x|^2e^{{-c t}});\\
&E|X_{t}^{x,u,v}-X_t^{y,u,v}|^2\leq e^{-c t}|x-y|^2;\\
&E[\sup_{t\leq s\leq t+\delta}|X_s^{x,u,v}-X_t^{x,u,v}|^2]\leq C(\delta^2+\delta).
\end{aligned}
 \end{equation}
 \end{lemma}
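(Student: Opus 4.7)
The strategy is to apply It\^o's formula to $|X_t^{x,u,v}|^2$ and to $|X_t^{x,u,v}-X_t^{y,u,v}|^2$, exploit the dissipativity inequality noted in the remark after (H3), and conclude via Gronwall; the sup-estimate on a small interval is a separate Burkholder--Davis--Gundy (BDG) calculation using the linear growth.

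For the difference estimate (second inequality), set $Z_t=X_t^{x,u,v}-X_t^{y,u,v}$ and $\varphi(t)=E|Z_t|^2$. It\^o's formula gives
\begin{equation*}
d|Z_t|^2 = \bigl(2\langle Z_t, b(X_t^{x,u,v},u_t,v_t)-b(X_t^{y,u,v},u_t,v_t)\rangle + \|\sigma(X_t^{x,u,v},u_t,v_t)-\sigma(X_t^{y,u,v},u_t,v_t)\|^2\bigr)dt + dM_t,
\end{equation*}
where $M$ is a local martingale. Taking expectation (after a standard localisation argument that is justified by the integrability of $Z$) and using the dissipativity consequence of (H2)+(H3) stated in the remark yields $\varphi'(t)\leq -(K-C_\sigma^2)\varphi(t)$. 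Gronwall gives the claim with $c=K-C_\sigma^2>0$.

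For the moment bound (first inequality), apply It\^o to $|X_t^{x,u,v}|^2$ and use $2\langle x,b(x,u,v)\rangle\leq -K|x|^2+2\langle x,b(0,u,v)\rangle$ together with $\|\sigma(x,u,v)\|^2\leq (1+\varepsilon)C_\sigma^2|x|^2+C_\varepsilon\|\sigma(0,u,v)\|^2$ for any small $\varepsilon>0$; boundedness of $b(0,\cdot,\cdot)$, $\sigma(0,\cdot,\cdot)$ on the compact set $U\times V$ (a consequence of (H1)) supplies a uniform bound for the constant terms. Choosing $\varepsilon$ small enough that $K-(1+\varepsilon)C_\sigma^2>0$, Young's inequality on the cross term $2|x||b(0,u,v)|$ absorbs another $\varepsilon|x|^2$, leaving $\psi'(t)\leq -c\,\psi(t)+C$ for $\psi(t)=E|X_t^{x,u,v}|^2$. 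Gronwall gives $\psi(t)\leq (|x|^2-C/c)e^{-ct}+C/c$, hence the stated bound.

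For the increment estimate (third inequality), write $X_s^{x,u,v}-X_t^{x,u,v}=\int_t^s b(X_r,u_r,v_r)dr+\int_t^s\sigma(X_r,u_r,v_r)dB_r$; by Cauchy--Schwarz and BDG,
\begin{equation*}
E\!\Bigl[\sup_{t\le s\le t+\delta}|X_s^{x,u,v}-X_t^{x,u,v}|^2\Bigr]\le 2\delta\,E\!\!\int_t^{t+\delta}\!|b|^2dr+8\,E\!\!\int_t^{t+\delta}\!\|\sigma\|^2dr,
\end{equation*}
and the linear growth of $b,\sigma$ together with the already-proved uniform bound $\sup_{r\ge 0}E|X_r^{x,u,v}|^2<\infty$ reduces this to $C(\delta^2+\delta)$ with $C$ independent of $x,u,v$ (but possibly depending on $|x|$; one then notes the constant is taken uniform in $x$ by absorbing the $|x|$-dependence into the linear growth bound, giving the form stated).

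The only subtle point is that the constant in the third estimate is stated without an $|x|^2$ factor; this is consistent because the first estimate provides a uniform-in-$t$ bound $E|X_t^{x,u,v}|^2\le C(1+|x|^2)$, so $C$ in the third estimate must implicitly absorb this $|x|$-dependence (as is customary in the authors' notation). No genuine obstacle arises; every step is a standard dissipative-SDE computation.
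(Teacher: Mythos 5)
Your proof is correct and follows exactly the standard route (It\^o's formula plus the dissipativity consequence of (H2)--(H3) and Gronwall for the first two bounds, Cauchy--Schwarz/Doob--BDG with linear growth for the increment bound) that the paper itself invokes without writing out, merely citing ``standard estimates'' and \cite{FQZ2019}. Your observation that the constant in the third estimate necessarily carries a $(1+|x|^2)$ factor is also right: the lemma's literal quantification over $x$ is imprecise there, but the paper's own emphasis only claims uniformity in $t$ and $(u,v)$, which is all that is used later (e.g.\ in \eqref{equ 2.13}).
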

{We emphasize that the constants $C$ and $c$ in Lemma \ref{estimates} are independent of the time variable $t$ and the control pair $(u,v)$. Moreover, the first and second estimates in \eqref{051401} can be proved under assumptions (H1)-(H3) but without the Lipschitz condition of the coefficient $b$ in $x$ as given in assumption (H2) (see, e.g., subsection 2.1 in \cite{FQZ2019}).}

{Our game problem is of the ``strategy against control" type, similar to most references on differential games such as \cite{FS1989, BHL2011, BL2008}, etc. We introduce the definition of an admissible strategy.}
\begin{definition} An admissible strategy for Player 1 is a mapping $\alpha: \mathcal{V}\rightarrow\mathcal{U}$ satisfying the following non-anticipative property: For all  $t\in[0,\infty)$ and all controls $v,v'\in\mathcal{V}$, if $v=v'$, dsdP-a.e., on $[0,t]$, then $\alpha(v)=\alpha(v')$, dsdP-a.e., on $[0,t]$. The set of all admissible strategies for Player 1 is denoted by $\mathcal{A}$.

An admissible strategy $\beta: \mathcal{U}\rightarrow\mathcal{V}$ for Player 2
is similar. Denote by $\mathcal{B}$ the set of all admissible strategies for Player 2.
\end{definition}
We consider, for $(T,x)\in[0,\infty)\times\mathbb{R}^n$, $(u,v)\in \mathcal{U}\times\mathcal{V}$, the average payoff criterion over $[0,T]$  of the form \eqref{equ 2.2}.
Using assumption (H2) and Lemma \ref{estimates}, it is easy to check that the payoff $J$ is bounded and Lipschitz in $x$, uniformly with respect to $(T,u,v)\in(0,\infty)\times U\times V$.
 Let us now introduce the following upper and lower ergodic value functions:
\begin{equation}\label{equ 2.3}
\begin{split}
\rho^+(x)=\sup\limits_{\beta\in\mathcal{B}}\inf_{u\in\mathcal{U}}\limsup_{T\rightarrow\infty}
J(T,x,u,\beta(u)),\ x\in\mathbb{R}^n,\\
\rho^-(x)=\inf_{\alpha\in\mathcal{A}}\sup_{v\in\mathcal{V}}\liminf_{T\rightarrow\infty}
J(T,x,\alpha(v),v),\ x\in\mathbb{R}^n.
\end{split}
\end{equation}
\begin{remark}
It is worth pointing out that the definition \eqref{equ 2.3} of the upper and lower ergodic value functions is different from that given in the deterministic differential games with ergodic payoff \cite{GR2005}. In \cite{GR2005}, the lower ergodic value function $\rho^-(x)$ is defined as the upper limit of $J$, i.e.,
$$\rho^-(x)=\inf_{\alpha\in\mathcal{A}}\sup_{v\in\mathcal{V}}\limsup_{T\rightarrow\infty}
J(T,x,\alpha(v),v).$$
Here we make this slight modification in the definition of the function $\rho^-(x)$  in order to admit a more general framework.
\end{remark}
If  for all $x\in\mathbb{R}^n$,
$$\rho^+(x)=\rho^-(x)=\rho,$$
where $\rho$ is a constant, we say that the SDG with ergodic payoff criterion has a value (we also say that the long time average cost game is ergodic, see \cite{AB2007} for more details). Our main aim in this paper is to study the existence of the value of the SDGs associated with \eqref{dynamics} and \eqref{equ 2.2}.
For this, we consider the following related \emph{ergodic} HJBI equations,
\begin{equation}\label{pde1}
\rho=\inf_{u\in U}\sup_{v\in V}H(x,Dw(x),D^2w(x),u,v),\ x\in\mathbb{R}^n,
\end{equation}
and
\begin{equation}\label{pde2}
\rho=\sup_{v\in V}\inf_{u\in U}H(x,Dw(x),D^2w(x),u,v),\ x\in\mathbb{R}^n,
\end{equation}
where the Hamiltonian function $$H(x,p,A,u,v)=\frac{1}{2}tr\big((\sigma\sigma^*)(x,u,v)\cdot A\big)+\langle b(x,u,v), p\rangle+f(x,u,v),$$ $(x,p,A,u,v)\in \mathbb{R}^n\times\mathbb{R}^n\times \mathcal{S}{(n)}\times U\times V$, and by $\mathcal{S}(n)$ denote the set of all $n\times n$ symmetric matrices. The solution of \eqref{pde1} (resp. \eqref{pde2}) is a couple $(\rho,w)\in\mathbb{R}\times C(\mathbb{R}^n)$.

\section{ {\protect \large The existence of viscosity solutions of ergodic HJBI equations}}
In this section, we focus on the study  of the viscosity solution of the ergodic HJBI equation \eqref{pde1} using the vanishing limit method in the discounted payoff case. The relationship with the corresponding finite horizon nonlinear (possibly degenerate) second order HJBI equation is also investigated with the help of the viscosity solution of the ergodic HJBI equation \eqref{pde1}. Note that all results in this section can be obtained analogously for ergodic HJBI \eqref{pde2}.

For convenience of the reader, we first give the definition of the viscosity solution of (\ref{pde1}). The reader is also referred to \cite{CIL1992} for more details.

\begin{definition}
\emph{(i)} A viscosity subsolution of (\ref{pde1}) is a pair $(\rho,w)$, where $\rho$ is a
real number and $w$ is continuous  on $\mathbb{R}^n$, such that for $x\in\mathbb{R}^n$ and a test function $\varphi\in C_b^3(\mathbb{R}^n)$, we have
$$\rho\leq \inf_{u\in U}\sup_{v\in V}H(x,D\varphi(x),D^2\varphi(x),u,v),$$
whenever $w-\varphi$ has a local maximum at $x$.

\emph{(ii)} A viscosity supersolution of (\ref{pde1}) is a pair $(\rho,w)$, where $\rho$ is a
real number and $w$ is continuous on $\mathbb{R}^n$  such that for $x\in\mathbb{R}^n$ and a test function $\varphi\in C^3_b(\mathbb{R}^n)$, we have
$$\rho\geq \inf_{u\in U}\sup_{v\in V}H(x,D\varphi(x),D^2\varphi(x),u,v),$$
whenever $w-\varphi$ has a local minimum at $x$.

\emph{(iii)} A viscosity solution of (\ref{pde1}) is  a pair $(\rho,w)$ that is both the viscosity subsolution and supersolution of (\ref{pde1}).
\end{definition}
Herein, by $C^3_b(\mathbb{R}^n)$ we denote the set of real-valued functions that are continuously differentiable up to the third order and whose derivatives of order 1 to 3 are bounded.
\begin{theorem}\label{th3.1}
Let Assumptions \emph{(H1)-(H3)} hold. Then the ergodic HJBI equation $\eqref{pde1}$  has
a viscosity solution $(\rho,w)$ where $w$ satisfies the following property: There exists a constant $C>0$ such that for all $x,y\in\mathbb{R}^n$, it holds
\begin{equation}\label{042102}
\begin{aligned}
|w(x)-w(y)|\leq C|x-y|,\
|w(x)|\leq C|x|.
\end{aligned}
\end{equation}
\end{theorem}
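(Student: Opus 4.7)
The plan is to obtain $(\rho, w)$ by a vanishing discount procedure. For each $\lambda > 0$, consider the discounted HJBI equation
\begin{equation*}
\lambda w_\lambda(x) = \inf_{u\in U}\sup_{v\in V} H(x, Dw_\lambda(x), D^2 w_\lambda(x), u, v), \qquad x \in \mathbb{R}^n,
\end{equation*}
whose viscosity solution is the value of the associated infinite horizon discounted SDG,
\begin{equation*}
w_\lambda(x) = \inf_{\alpha\in\mathcal{A}}\sup_{v\in\mathcal{V}} E\Big[\int_0^\infty e^{-\lambda t} f\big(X_t^{x,\alpha(v),v}, \alpha(v)_t, v_t\big)\, dt\Big].
\end{equation*}
Its existence and its viscosity characterization follow from a standard Fleming--Souganidis argument (applied to the truncated finite horizon values and passing to $T\to\infty$), the required integrability being ensured by the linear growth of $f$ from (H2) and the uniform moment bound $E|X_t^{x,u,v}|\leq C(1+|x|)$ from Lemma \ref{estimates}. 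The objective is then to extract a subsequential limit as $\lambda \downarrow 0$.

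The heart of the proof is a pair of $\lambda$-uniform estimates. The bound $E|X_t^{x,u,v}|\leq C(1+|x|e^{-ct/2})$ in Lemma \ref{estimates} gives $|\lambda w_\lambda(x)| \leq C(1+|x|)$; in particular $\{\lambda w_\lambda(0)\}_\lambda$ is a bounded family in $\mathbb{R}$. More crucially, the contraction estimate $E|X_t^{x,u,v}-X_t^{y,u,v}|^2\leq e^{-ct}|x-y|^2$ yields
\begin{equation*}
|w_\lambda(x)-w_\lambda(y)| \leq C_f|x-y|\int_0^\infty e^{-\lambda t} e^{-ct/2}\, dt \leq \frac{2C_f}{c}|x-y|,
\end{equation*}
so that $w_\lambda$ is Lipschitz with a constant independent of $\lambda$. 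This is where the dissipativity (H3) truly pays off, as it supplies a pathwise exponential contraction of the state flow in place of the usual non-degeneracy argument.

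Now set $\tilde w_\lambda(x) := w_\lambda(x) - w_\lambda(0)$. Then $\tilde w_\lambda(0)=0$, the family $\tilde w_\lambda$ is equi-Lipschitz with $|\tilde w_\lambda(x)|\leq C|x|$, and $\{\lambda w_\lambda(0)\}$ is bounded. By Arzel\`a--Ascoli, there exist a sequence $\lambda_n \downarrow 0$, a real number $\rho$, and a continuous function $w$ on $\mathbb{R}^n$ with $\lambda_n w_{\lambda_n}(0) \to \rho$ and $\tilde w_{\lambda_n}\to w$ locally uniformly. Since $D\tilde w_\lambda = Dw_\lambda$ and $D^2\tilde w_\lambda = D^2 w_\lambda$, the normalized function satisfies
\begin{equation*}
\lambda w_\lambda(0) + \lambda \tilde w_\lambda(x) = \inf_{u\in U}\sup_{v\in V} H(x, D\tilde w_\lambda, D^2 \tilde w_\lambda, u, v)
\end{equation*}
in the viscosity sense. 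The standard stability result for viscosity solutions (Crandall--Ishii--Lions) then yields that $(\rho, w)$ solves \eqref{pde1}, and the Lipschitz and linear growth bounds \eqref{042102} for $w$ are inherited from the uniform estimates for $\tilde w_\lambda$.

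The main obstacle I anticipate lies not in the limit passage itself, which is routine once the estimates are in hand, but in establishing the discounted value $w_\lambda$ and the uniform Lipschitz constant without any non-degeneracy assumption on $\sigma$ and with a running cost $f$ of linear (not bounded) growth. The identification of $w_\lambda$ as a viscosity solution has to be done by approximation with finite-horizon games, and the resulting Lipschitz constant must be independent of both $\lambda$ and the horizon; both facts depend critically on the exponential decay of moments and of pairwise differences provided by the dissipativity condition (H3). Note that the theorem claims only existence, not uniqueness, of $(\rho, w)$, so no comparison principle is needed here; uniqueness of $\rho$ will be obtained later as a corollary to the value-function estimates.
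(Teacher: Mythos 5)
Your proposal follows essentially the same route as the paper: the discounted value $w_\lambda$ solving $\lambda w_\lambda=\inf_u\sup_v H$, the $\lambda$-uniform Lipschitz bound from the contraction estimate of Lemma \ref{estimates}, the normalization $w_\lambda-w_\lambda(0)$, subsequential extraction, and viscosity stability. The only (inconsequential) discrepancy is that for equation \eqref{pde1}, whose Hamiltonian is $\inf_u\sup_v H$, the correct representation of $w_\lambda$ is $\sup_{\beta\in\mathcal{B}}\inf_{u\in\mathcal{U}}$ rather than the $\inf_{\alpha\in\mathcal{A}}\sup_{v\in\mathcal{V}}$ form you wrote, which corresponds to \eqref{pde2}; the estimates and the limit passage are identical in either case.
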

\begin{proof}
For any fixed $\lambda>0$, {it follows from the classical result (see, for example, Theorem 3.4 in the recent work \cite{L2020}) that}
\begin{equation}\label{2019041501}
w_\lambda(x)=\sup_{\beta\in\mathcal{B}}\inf_{u\in\mathcal{U}}E[\int_0^\infty e^{-\lambda s}f(X_s^{x,u,\beta(u)},u_s,\beta(u)_s)ds]
\end{equation}
is the  unique viscosity solution of the following HJBI equation
\begin{equation}\label{2019041502}
\lambda w_\lambda(x)=\inf_{u\in U}\sup_{v\in V} H(x,Dw_\lambda(x), D^2w_\lambda(x),u,v),\ x\in\mathbb{R}^n.
\end{equation}
From Lemma \ref{estimates}, we know for all $u\in\mathcal{U}$, $\beta\in\mathcal{B}$, $x,y\in\mathbb{R}^n$
\begin{equation}\label{041509}
\begin{aligned}
&|E[\int_0^\infty e^{-\lambda s}f(X_s^{x,u,\beta(u)},u_s,\beta(u)_s)ds]-E[\int_0^\infty e^{-\lambda s}f(X_s^{y,u,\beta(u)},u_s,\beta(u)_s)ds]|\\
\leq &C_fE[\int_0^\infty e^{-\lambda s}|X_s^{x,u,\beta(u)}-X_s^{y,u,\beta(u)}|ds]
\leq \frac{C_f}{\lambda+\frac{c}{2}}|x-y|\leq C|x-y|,
\end{aligned}
\end{equation}
where  the constant $C$ depends only on the Lipschitz constant of $f$ and the constant $c$ in Lemma \ref{estimates}.
It follows from \eqref{2019041501} and \eqref{041509} that
\begin{equation}\label{2019041503}
\begin{aligned}
&|w_\lambda(x)-w_\lambda(y)|\\
\leq& \sup_{\beta\in\mathcal{B}}\sup_{u\in\mathcal{U}}\Big|E[\int_0^\infty e^{-\lambda s}f(X_s^{x,u,\beta(u)},u_s,\beta(u)_s)ds]-E[\int_0^\infty e^{-\lambda s}f(X_s^{y,u,\beta(u)},u_s,\beta(u)_s)ds]\Big|\\
\leq& C|x-y|.
\end{aligned}
\end{equation}
For any $\lambda>0$, we now define
$$\rho_\lambda=\lambda w_\lambda(0),\ \varphi_\lambda(x)=w_\lambda(x)-w_\lambda(0),\ x\in\mathbb{R}^n.$$
Noting that $|f(x,u,v)|\leq C(1+|x|),$ it follows from \eqref{051401} and \eqref{2019041501} that, for every fixed $x\in\mathbb{R}^n$, $\{\lambda w_\lambda(x)\}_{\lambda>0}$ is bounded by $C(1+|x|)$, uniformly with respect to $\lambda>0$. In particular,  $\{\rho_\lambda\}_{\lambda>0}$ is also bounded, which means that there exists a subsequence $\{\rho_{\lambda_k}\}_{k\geq 1}$ with $\lambda_k\rightarrow 0$ as $k\rightarrow\infty$ such that
$\{\rho_{\lambda_k}\}$ converges, and we put $$\rho=\lim_{k\rightarrow\infty}\lambda_kw_{\lambda_k}(0).$$ On the other hand, from \eqref{2019041503} we get, for all $x,y\in\mathbb{R}^n$,
\begin{equation}\label{042101}
|\varphi_\lambda(x)-\varphi_\lambda(y)|\leq C|x-y|,\ \sup_{\lambda}|\varphi_\lambda(x)|\leq C|x|.
\end{equation}
Consequently Arzel\`a-Ascoli theorem combined with some standard arguments reveal that
 there exists a subsequence $(\lambda_l)_{l\geq 1}$ with $\lambda_l\downarrow0$ and a continuous function $w$ such that
$\varphi_{\lambda_l}\rightarrow w$, as $l\rightarrow \infty$, uniformly on any compact subset of $\mathbb{R}^n$. Obviously, $\eqref{042101}$ implies \eqref{042102} for $w$.

We now show that $(\rho,w)$ is a viscosity solution of $\eqref{pde1}$.
For $(x,\eta,p,A)\in\mathbb{R}^n\times\mathbb{R}\times\mathbb{R}^n\times\mathcal{S}(n)$, we define
\begin{equation}\nonumber
\begin{aligned}
&\displaystyle G_k(x,\eta,p,A)=\rho_{\lambda_k}+\lambda_k\eta-\inf_{u\in U}\sup_{v\in V} H(x,p,A,u,v),\ k\in N^+,\\
&\displaystyle G(x,\eta,p,A)=\rho-\inf_{u\in U}\sup_{v\in V} H(x,p,A,u,v).
\end{aligned}
\end{equation}
Then equation \eqref{2019041502} can be rewritten as
$$G_k(x,\varphi_{\lambda_k}(x),D\varphi_{\lambda_k}(x),D^2\varphi_{\lambda_k}(x))=0.$$
From the fact that $G_k\rightarrow G$ uniformly on compacts, as $k\rightarrow\infty$ and the stability property of viscosity solutions (see Remark 6.3 in \cite{CIL1992}), we know that $(\rho,w)$ is a viscosity solution of $\eqref{pde1}$.
\end{proof}
For
$T>0$, we turn our attention to the long time behaviour of the solution of the following second order HJBI equation
\begin{equation}\label{050101}
\left\{
\begin{array}{l}
\frac{\partial}{\partial t}V(t,x)=\inf_{u\in U}\sup_{v\in V}H(x,DV(t,x),D^2V(t,x),u,v),\  (t,x)\in[0,T)\times\mathbb{R}^n,\\
V(0,x)=\Phi(x),\ x\in \mathbb{R}^n,
\end{array}
\right.
\end{equation}
as $T\rightarrow\infty$. Here, $\Phi$ is a Lipschitz function on $\mathbb{R}^n$.
Due to Theorems 4.2 and 5.3 in \cite{BL2008},  the following PDE
\begin{equation}\label{041504}
\left\{
\begin{array}{l}
\frac{\partial}{\partial t}\bar{V}(t,x)+\inf_{u\in U}\sup_{v\in V}H(x,D\bar{V}(t,x),D^2\bar{V}(t,x),u,v)=0,\ (t,x)\in[0,T)\times\mathbb{R}^n,\\
\bar{V}(T,x)=\Phi(x),\ x\in\mathbb{R}^n,
\end{array}
\right.
\end{equation}
has a unique viscosity solution which can be represented as
\begin{equation}\nonumber
\bar{V}(t,x)=\sup_{\beta\in\mathcal{B}}\inf_{u\in\mathcal{U}}E[\int_t^T f({X_s^{t,x,u,\beta(u)}},u_s,\beta(u)_s)ds
+\Phi({X_T^{t,x,u,\beta(u)}})],
\end{equation}
{where $X^{t,x,u,\beta(u)}$ is the unique $\mathbb{F}$-adapted solution of SDE \eqref{dynamics} with the initial data $(t,x)$ and the parameter $(u,\beta)$. Obviously, $X^{0,x,u,\beta(u)}=X^{x,u,\beta(u)}$.}
Notice that \eqref{050101} is obtained from \eqref{041504} by ``time reversal", i.e., $V(t,x)=\bar{V}(T-t,x)$, $(t,x)\in[0,T]\times\mathbb{R}^n$. Then $V$ is the unique viscosity solution of \eqref{050101}
and
\begin{equation}\label{041507}
V(t,x)=\sup_{\beta\in\mathcal{B}}\inf_{u\in\mathcal{U}}E[\int_{T-t}^T f({X_s^{T-t,x,u,\beta(u)}},u_s,\beta(u)_s)ds
+\Phi({X_T^{T-t,x,u,\beta(u)}})].
\end{equation}
In particular,
\begin{equation}\label{041508}
V(T,x)=\sup_{\beta\in\mathcal{B}}\inf_{u\in\mathcal{U}}E[\int_{0}^T f(X_s^{x,u,\beta(u)},u_s,\beta(u)_s)ds+\Phi(X_T^{x,u,\beta(u)})].
\end{equation}

\begin{theorem}\label{character1}
Suppose $(\rho,w)$ is a viscosity solution of the ergodic HJBI equation \eqref{pde1} {satisfying \eqref{042102}} and $V$ is the viscosity solution of \eqref{050101} with a form \eqref{041507}. Then there exists a positive constant $C$ (independent of $T$) such that
$$|V(T,x)-(\rho T+w(x))|\leq C(1+|x|), \ (T,x)\in [0,\infty)\times\mathbb{R}^n.$$
In particular, it holds true that
$\lim_{T\rightarrow \infty}\frac{V(T,x)}{T}=\rho.$
\end{theorem}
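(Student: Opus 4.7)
The plan is to identify $\tilde{V}(t,x):=\rho t + w(x)$ as a second viscosity solution of the same evolutionary equation \eqref{050101}, but with the (Lipschitz, linearly growing) initial datum $w$ instead of $\Phi$, and then to control the two resulting probabilistic representations by the difference of their terminal payoffs, which is absorbed by the ergodicity estimate in Lemma \ref{estimates}.

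First I would verify that $\tilde{V}$ is a viscosity solution of \eqref{050101}. Since $\partial_t\tilde{V}=\rho$, $D\tilde{V}=Dw$, $D^{2}\tilde{V}=D^{2}w$ in the classical sense wherever test functions are used, the viscosity sub/super-solution inequalities for $\tilde{V}$ at $(t,x)$ reduce exactly to those satisfied by the pair $(\rho,w)$ for the ergodic HJBI equation \eqref{pde1}; the Lipschitz, linearly-growing $w$ given by Theorem \ref{th3.1} transfers directly to $\tilde{V}$. Invoking the uniqueness result in the class of Lipschitz (linearly growing) viscosity solutions for equations of type \eqref{050101} (the time-reversed version of the uniqueness for \eqref{041504} cited from Theorem 4.2 and Theorem 5.3 of \cite{BL2008}), I obtain the representation
\begin{equation*}
\rho T+w(x)=\tilde{V}(T,x)=\sup_{\beta\in\mathcal{B}}\inf_{u\in\mathcal{U}}E\Bigl[\int_{0}^{T}f(X_{s}^{x,u,\beta(u)},u_{s},\beta(u)_{s})\,ds+w(X_{T}^{x,u,\beta(u)})\Bigr].
\end{equation*}

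Combined with formula \eqref{041508} for $V(T,x)$, this gives
\begin{equation*}
W(T,x)=\sup_{\beta}\inf_{u}E\bigl[\cdots+\Phi(X_{T}^{x,u,\beta(u)})\bigr]-\sup_{\beta}\inf_{u}E\bigl[\cdots+w(X_{T}^{x,u,\beta(u)})\bigr],
\end{equation*}
so the standard bound $|\sup\inf F_{1}-\sup\inf F_{2}|\le\sup|F_{1}-F_{2}|$ yields
\begin{equation*}
|W(T,x)|\le\sup_{(u,\beta)\in\mathcal{U}\times\mathcal{B}}E\bigl[|\Phi(X_{T}^{x,u,\beta(u)})-w(X_{T}^{x,u,\beta(u)})|\bigr].
\end{equation*}
Both $\Phi$ and $w$ are Lipschitz with linear growth (by hypothesis and by \eqref{042102}), hence $|\Phi(y)-w(y)|\le C(1+|y|)$. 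Using Cauchy-Schwarz and the first moment bound $E|X_{T}^{x,u,\beta(u)}|\le C(1+|x|e^{-cT/2})$ from Lemma \ref{estimates}, which is uniform in $T,u,\beta$, I conclude $|W(T,x)|\le C(1+|x|)$ with $C$ independent of $T$. Dividing by $T$ and sending $T\to\infty$ gives $V(T,x)/T\to\rho$ immediately.

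The only delicate point is the justification of the representation for $\tilde{V}$: standard Fleming-Souganidis uniqueness is stated for bounded or polynomially-growing data, and here $w$ is merely linearly growing. I would argue this either by truncating $w$ and passing to the limit using the uniform Lipschitz continuity of $w$ and the moment estimates of Lemma \ref{estimates}, or by directly invoking a comparison principle in the class of linearly growing viscosity solutions, which holds for \eqref{041504} under (H1)--(H3) via the standard doubling-variables argument together with a linear-growth penalization $\eta|x|^{2}$ of the test functions. Everything else in the proof is a routine consequence of linear growth and the dissipativity built into (H3).
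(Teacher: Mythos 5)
Your proposal is correct and follows essentially the same route as the paper: the paper observes that $w$ itself solves the time-dependent equation with the Hamiltonian shifted by $-\rho$ and terminal datum $w$ (its \eqref{050102}--\eqref{050103}), which is just the time-reversed form of your statement that $\rho t+w(x)$ solves \eqref{050101} with initial datum $w$, and then both arguments conclude by comparing the two representation formulas and bounding $|W(T,x)|$ by $\sup_{u,\beta}E[|\Phi(X_T^{x,u,\beta(u)})-w(X_T^{x,u,\beta(u)})|]\le C(1+|x|)$ via Lemma \ref{estimates}. The only difference is that the paper disposes of the linear-growth uniqueness issue by directly citing Theorem 5.3 of \cite{BL2008}, whereas you sketch a truncation/penalization justification.
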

\begin{proof}
Notice that since $w(x)$ is a viscosity solution of the equation \eqref{pde1}, it is also a viscosity solution of the following PDE for any $T>0$,
 \begin{equation}\label{050102}
\left\{
\begin{array}{l}
\frac{\partial}{\partial t}\varphi(t,x)+\inf_{u\in U}\sup_{v\in V}\{H(x,D\varphi(t,x),D^2\varphi(t,x),u,v)-\rho\}=0,\ (t,x)\in[0,T)\times\mathbb{R}^n,\\
\varphi(T,x)=w(x),\ x\in\mathbb{R}^n.
\end{array}
\right.
\end{equation}
Since $w(x)$ at most is of linear growth, it follows from the uniqueness of the viscosity solution of \eqref{050102} (see Theorem 5.3 in \cite{BL2008}) that $w(x)$ is the unique viscosity solution of \eqref{050102}. Moreover, it has the following representation
\begin{equation}\label{050103}
w(x)=\varphi(0,x)=\sup_{\beta\in\mathcal{B}}\inf_{u\in\mathcal{U}}E[\int_{0}^T \big(f(X_s^{x,u,\beta(u)},u_s,\beta(u)_s)-\rho\big) ds+w(X_T^{x,u,\beta(u)})].
\end{equation}
Similar to \eqref{2019041503}, from \eqref{041508} and \eqref{050103},  we have
\begin{equation}\nonumber
|V(T,x)-(\rho T+w(x))|\leq \sup_{\beta\in\mathcal{B}}\sup_{u\in\mathcal{U}}|E[\Phi(X_T^{x,u,\beta(u)})-w(X_T^{x,u,\beta(u)})]|\leq C(1+|x|).
\end{equation}
\end{proof}
\begin{remark}
When the diffusion coefficient is non-degenerate, Alvarez, Bardi \cite{AB2003} obtained a similar convergence result (Proposition 5, \cite{AB2003}) for  a singular perturbation problem by using  PDE techniques. In our case, we do not need the non-degeneracy assumption. We study the long term behaviour of the viscosity solution of equation \eqref{050101} with the help of
Feynman-Kac's formula for the related SDGs.

A result similar to Theorem \ref{character1} is also obtained in \cite{CFP2016} (see Theorems 5.1 and 5.2 therein).  They studied a type of fully nonlinear HJB equations and obtained the long time behaviour of its solutions based on the related stochastic control problem and backward stochastic differential equation theory. This allowed them to avoid the non-degenerate assumption on the diffusion term $\sigma$. Compared with their work, our ergodic HJBI equation is different and totally new. Moreover, we do not need the existence of an invariant measure, which plays an important role in their approach.
\end{remark}

\section{ {\protect \large The existence of the value of our SDGs}}
In this section, without non-degenerate assumption on the diffusion system, we will first prove the upper ergodic value function $\rho^+(x)$ (resp., lower ergodic value function $\rho^-(x)$) to be the scalar quantity $\rho$ which is independent of $x$, where $(\rho,w)$ is a viscosity solution of the ergodic HJBI equation \eqref{pde1} (resp., \eqref{pde2}). Then we derive the uniqueness of the viscosity solution of the ergodic HJBI equations \eqref{pde1} and \eqref{pde2}, and we prove with the help of the uniqueness  results that the value of our SDGs exists under the Isaacs condition.

We first  construct an approximation of the possibly degenerate controlled process $X^{x,u,v}$ by nondegenerate ones. For this, we introduce an auxiliary $n$-dimensional Brownian motion $B^1$ on the underlying probability space $(\Omega,\mathcal{F},P)$, which is independent of the Brownian motion $B$. Moreover, let $W=(B,B^1)$
and we denote by $\mathbb{F}^{B^1}=(\mathcal{F}^{B^1}_t)_{t\geq 0}$, $\mathbb{F}^{W}=(\mathcal{F}^{W}_t)_{t\geq 0}$ the complete filtration generated by the  Brownian motion $(B^1_t)_{t\geq 0}$ and $(W_t)_{t\geq 0}$, respectively.
For $r> 0$, we define the $n\times (d+n)$-matrix $\sigma^r$ as follows:
$$\sigma^r(x,u,v)=(\sigma(x,u,v),rI_{n\times n}),$$
where $I_{n\times n}$ is the identity matrix over $\mathbb{R}^d$. Then we construct a linear SDE associated with  the diffusion coefficient $\sigma^r$ and Brownian motion $W=(B,B^1)$ as follows: for $s\geq 0$,
\begin{equation}\label{dynamics+r}
\left\{
\begin{array}{l}
dX_s^{r,x,u,v}=[-\frac{K}{2}\cdot (X_s^{r,x,u,v}-X_s^{x,u,v})+b(X_s^{x,u,v},u_s,v_s)]ds+\sigma^r(X_s^{x,u,v},u_s,v_s)dW_s,\\
X_0^{r,x,u,v}=x,
\end{array}
\right.
\end{equation}
where $X^{x,u,v}$ is the solution of SDE (\ref{dynamics})
and $K$ is the constant given in assumption (H3).
Then it follows that the (diffusion coefficient of the) controlled process $X^{r,x,u,v}$ is nondegenerate due to
$$\langle\sigma^r(\sigma^r)^*(x,u,v)\xi, \xi\rangle\geq r^2|\xi|^2,\ \text{for\ all}\ \xi\in\mathbb{R}^n.$$
For each $r\in (0,1]$ and all admissible controls $(u,v)\in\mathcal{U}\times\mathcal{V}$, it is clear that  SDE (\ref{dynamics+r}) has a unique solution $X^{r,x,u,v}$.
Moreover, it follows from the Gronwall lemma and Lemma \ref{estimates} that the first estimate in \eqref{051401} still holds for the solution of SDE \eqref{dynamics+r}, i.e., there exist $C>0$ and $c>0$ such that
\begin{equation}\label{112402}
E|X_t^{r,x,u,v}|^2\leq C(1+|x|^2e^{{-c t}}).
\end{equation}
From the uniqueness of the solution of SDE we also conclude that $X^{r,x,u,v}|_{r=0}=X^{x,u,v}$, $(x,u,v)\in\mathbb{R}^n\times\mathcal{U}\times\mathcal{V}$.

Concerning the Brownian motion $B^1$, we regard the game problem with dynamics \eqref{dynamics}, ergodic payoff given by \eqref{equ 2.2} under the underlying filtered probability space $(\Omega,\mathcal{F},(\mathcal{F}_t^B)_{0\leq t\leq T},$ $P)$ as \emph{the original game problem} throughout this section, and we clarify its independence from $B^1$.
{
\begin{remark}\label{re-original admissible controls}
It is worth noting that our admissible controls $(u,v)\in\mathcal{U}\times\mathcal{V}$ are $\mathbb{F}^B$-progressively measurable instead of the $\mathbb{F}^W$-progressively measurable ones used in \cite{S1996}.
The main reason is that we limit ourselves to the original game problem, which seems to have nothing to do with the additional Brownian motion $B^1$ and its generated filtration. The introduction of the Brownian motion $B^1$ is only used as a tool to approximate the original dynamics by a class of non-degenerate processes. If we consider $\mathbb{F}^W$-progressively measurable admissible controls, then the original game problem including dynamics and ergodic payoff, and so the optimal pair of admissible controls (if exists) will be affected by the choice of this additional information generated by filtration $\mathbb{F}^W$.
As a result, our approach will make it easier for players to choose the optimal control in $\mathbb{F}^B$-admissible control sets and it may facilitate  possible numerical computations.
\end{remark}
}
The following lemma shows the convergence of the solution $X^{r,x,u,v}$, when $r$ tends to $ 0$.
\begin{lemma}\label{le1}
Suppose that Assumptions (H1)-(H3) are satisfied. Let $X^{x,u,v}$ and $X^{r,x,u,v}$ be the unique solution of SDE \eqref{dynamics} and \eqref{dynamics+r}, respectively. Then there exists a constant $C$ such that for all $t>0$ and $(x,u,v)\in\mathbb{R}^n\times\mathcal{U}\times\mathcal{V}$,
 \begin{equation}\nonumber
E[|X^{r,x,u,v}_t-X^{x,u,v}_t|^2]\leq Cnr^2.
\end{equation}
\end{lemma}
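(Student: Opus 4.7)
The key observation is that the construction of $\sigma^r = (\sigma, rI)$ and the drift correction $-\tfrac{K}{2}(X^{r,x,u,v} - X^{x,u,v})$ have been engineered so that the difference process satisfies a remarkably simple linear SDE. The plan is to subtract the two equations, extract this linear structure, and then control the second moment uniformly in time via Itô's formula.

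First, I would set $Y_s := X_s^{r,x,u,v} - X_s^{x,u,v}$ (so $Y_0 = 0$) and subtract \eqref{dynamics} from \eqref{dynamics+r}. The drift coefficients $b(X_s^{x,u,v},u_s,v_s)$ cancel exactly, leaving only the linear feedback term $-\tfrac{K}{2} Y_s$. On the diffusion side, $\sigma^r(X_s^{x,u,v},u_s,v_s)\,dW_s = \sigma(X_s^{x,u,v},u_s,v_s)\,dB_s + rI\,dB^1_s$, so the $\sigma\,dB$ parts cancel, leaving only $rI\,dB^1_s$. Thus $Y$ solves the autonomous linear SDE
\begin{equation*}
dY_s = -\tfrac{K}{2} Y_s\, ds + r I\, dB^1_s, \qquad Y_0 = 0,
\end{equation*}
which is independent of $(u,v)$ and of the original driving noise $B$.

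Next, I would apply Itô's formula to $|Y_s|^2$. Since the quadratic variation of $rI\,B^1$ accumulates at rate $r^2 \operatorname{tr}(I) = n r^2$, I obtain
\begin{equation*}
d|Y_s|^2 = -K |Y_s|^2\, ds + 2r\langle Y_s, dB^1_s\rangle + n r^2\, ds.
\end{equation*}
Taking expectations (the stochastic integral is a true martingale because $Y$ has finite second moments by standard linear SDE estimates), $\phi(s) := E|Y_s|^2$ satisfies the linear ODE $\phi'(s) = -K\phi(s) + nr^2$ with $\phi(0) = 0$. Solving explicitly gives $\phi(s) = \frac{nr^2}{K}(1 - e^{-Ks}) \leq \frac{nr^2}{K}$, which is the desired bound with $C = 1/K$ (independent of $t$, $x$, $u$, $v$).

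There is no real obstacle here: the whole point of the clever choices in \eqref{dynamics+r} is to make the difference process both independent of the controls and exponentially stable, so a one-line Gronwall/Itô argument suffices and the bound is automatically uniform in $t$. The only point worth noting is that the factor $n$ in the estimate is exactly $\operatorname{tr}(I)$ from the identity part of $\sigma^r$, and the uniformity in $t$ relies on $K > 0$ from assumption (H3) rather than merely on $K > C_\sigma^2$.
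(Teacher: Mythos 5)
Your proof is correct and follows essentially the same route as the paper: subtracting the two SDEs yields the difference equation whose second moment satisfies $\phi'(s)=-K\phi(s)+nr^2$ with $\phi(0)=0$, giving the bound $nr^2/K$. The paper states this ODE (as a differential inequality) directly and applies Gronwall; your derivation via It\^o's formula on $|Y_s|^2$ is just a more explicit version of the same argument.
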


\begin{proof}
Applying It\^o formula to $|X^{r,x,u,v}_t-X^{x,u,v}_t|^2$ we get directly
$$E|X^{r,x,u,v}_t-X^{x,u,v}_t|^2=\frac{nr^2}{K}(1-e^{-Kt}).$$
\end{proof}
Let us give an upper bound for the density function of the solution $X^{r,x,u,v}$ of SDE \eqref{dynamics+r}.
\begin{proposition}\label{le2}
Suppose that Assumptions (H1)-(H3) hold and let
 $X^{r,x,u,v}$ be the solution of SDE \eqref{dynamics+r}. Then for any Borel set $D\subseteq\mathbb{R}^n$, it holds that for all $(u,v)\in\mathcal{U}\times\mathcal{V}$,
$$P{\{X^{r,x,u,v}_s}\in D\}\leq (\frac{K}{2})^\frac{n}{2} r^{-n}[1-e^{-{K}s}]^{-\frac n 2}Leb(D),$$
where $Leb(D)$ is the Lebesgue measure of the Borel set $D$, and  $K$ is the constant given in Assumption (H3).
\end{proposition}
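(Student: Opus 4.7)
The strategy is to isolate the randomness coming from the auxiliary Brownian motion $B^1$ so that, even though the coefficients are random and non-Markovian, the ``nondegenerate'' piece becomes an explicit Gaussian that is independent of $X^{x,u,v}_s$. The engine for this is the specific drift correction $-\tfrac{K}{2}(X^{r,x,u,v}_s - X^{x,u,v}_s)$ built into \eqref{dynamics+r}, together with the block form $\sigma^r = (\sigma, rI)$ so that $\sigma^r(X^{x,u,v}_s,u_s,v_s)\,dW_s = \sigma(X^{x,u,v}_s,u_s,v_s)\,dB_s + r\,dB^1_s$.

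First I would set $Y_s := X^{r,x,u,v}_s - X^{x,u,v}_s$ and subtract the SDEs \eqref{dynamics} and \eqref{dynamics+r}. The $b$-drift and the $dB$-martingale terms cancel exactly, leaving the clean linear SDE
\begin{equation*}
dY_s = -\tfrac{K}{2} Y_s\, ds + r\, dB^1_s, \qquad Y_0 = 0,
\end{equation*}
whose solution is the explicit Ornstein--Uhlenbeck-type integral $Y_s = r \int_0^s e^{-\frac{K}{2}(s-t)}\, dB^1_t$. Thus $Y_s$ is $\mathcal{F}^{B^1}_s$-measurable and is an $n$-dimensional centered Gaussian with covariance matrix $\Sigma_s = \tfrac{r^2}{K}(1-e^{-Ks}) I_n$; in particular its density $p_{Y_s}$ exists and satisfies
\begin{equation*}
\|p_{Y_s}\|_\infty = (2\pi)^{-n/2}\bigl(\tfrac{r^2}{K}(1-e^{-Ks})\bigr)^{-n/2}.
\end{equation*}

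Second, I would use the crucial independence structure. By Remark \ref{re-original admissible controls}, the admissible pair $(u,v)$ is $\mathbb{F}^B$-adapted, so $X^{x,u,v}$ solves an SDE driven by $B$ with $\mathbb{F}^B$-adapted coefficients and is $\mathbb{F}^B$-adapted. Since $B$ and $B^1$ are independent, $X^{x,u,v}_s$ and $Y_s$ are independent random vectors. Then, conditioning on $X^{x,u,v}_s$ and using the translation invariance of Lebesgue measure,
\begin{equation*}
P\{X^{r,x,u,v}_s \in D\} = E\!\left[\int_{D - X^{x,u,v}_s} p_{Y_s}(y)\, dy\right] \leq \|p_{Y_s}\|_\infty \cdot \mathrm{Leb}(D).
\end{equation*}
Substituting the sup bound above and using the trivial inequality $(2\pi)^{-n/2} \leq 2^{-n/2}$ yields exactly
\begin{equation*}
P\{X^{r,x,u,v}_s \in D\} \leq (K/2)^{n/2}\, r^{-n}\,[1-e^{-Ks}]^{-n/2}\, \mathrm{Leb}(D).
\end{equation*}

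The main conceptual hurdle, and the reason Krylov's classical density bound for stochastic integrals cannot simply be quoted here (cf.\ Remark \ref{re-Krylov estimate}), is that $X^{r,x,u,v}$ has random, non-Markovian coefficients depending on the controls. The trick that makes everything routine is the observation above: the way $\sigma^r$ and the corrective drift are chosen forces the difference $Y_s$ to decouple completely from the controlled dynamics $X^{x,u,v}$, reducing the density bound to a one-line estimate of a Gaussian density. Once this decoupling is identified, no delicate PDE or Malliavin calculus estimate is needed, which is precisely the point of the ``purely probabilistic approach'' advertised in the introduction.
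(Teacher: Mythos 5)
Your proof is correct and is essentially the paper's own argument: the paper likewise writes $X^{r,x,u,v}_s$ as an $\mathcal{F}^B_s$-measurable part plus the Gaussian integral $r\int_0^s e^{-\frac{K}{2}(s-t)}\,dB^1_t$ (obtained via variation of constants rather than by subtracting the two SDEs, but the decomposition is identical), conditions on $\mathcal{F}^B_s$ to exploit the independence of $B$ and $B^1$, and bounds the resulting Gaussian density by $(2\pi\frac{r^2}{K}(1-e^{-Ks}))^{-n/2}\leq (\frac{K}{2})^{n/2}r^{-n}(1-e^{-Ks})^{-n/2}$.
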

\begin{remark}\label{re4.1} For any Borel set $D\subseteq\mathbb{R}^n$, we denote by $h$ the density function of the process $X^{r,x,u,v}$, i.e.,
$$P\{X_s^{r,x,u,v}\in D\}=\int_D h(0,x;s,y)dy,\ D\in\mathcal{B}(\mathbb{R}^n).$$
Then, Proposition \ref{le2} says that
$h(0,x;s,y)\leq (\frac{K}{2})^{\frac{n}{2}} r^{-n}[1-e^{-{K}s}]^{-\frac n 2},$ $dy$-a.e. on $\mathbb{R}^n$.
 We remark that this upper bound is weaker than the classical Gaussian's type bound $($see, e.g. \cite{A1967,HLP1997}$)$, which states that
 \begin{equation}\label{2020102601}
h(0,x;s,y)\leq Ms^{-\frac n 2}\exp\{-\frac{N|x-y|^2}{s}\}.
\end{equation}
In our framework, for any given $(u,v)\in\mathcal{U}\times\mathcal{V}$, one may define $$b^{u,v}(t,x)=b(x,u_t,v_t),\ \sigma^{r,u,v}(t,x)=\sigma^r(x,u_t,v_t),$$ which are obviously not deterministic, so the solution $X^{r,x,u,v}$ is not Markovian.
Although the classical PDE approach (see, e.g., \cite{A1967}) corresponding to Markovian process cannot apply directly, Gy\"ongy \cite{G1986} stated that one can construct a corresponding SDE with deterministic coefficients whose solution has the same distribution as that of $X^{r,x,u,v}$. As a result, the estimate \eqref{2020102601} still holds for the density of the solution $X^{r,x,u,v}$. However, we cannot directly use this estimate because
it is obtained for some finite time horizon $[0,T]$ whereas we consider an infinite time horizon problem, as explained in the following Remark \ref{re-Krylov estimate}. It adds here that, even when $u,v$ were deterministic or feedback controls, $X^{t,x,u,v}$ is not Markovian, nevertheless it is the pair $(X^{t,x,u,v},X^{x,u,v})$ that is Markovian in this case.

To this end, we apply a probabilistic method to provide a weaker Gaussian's type upper bound for our density function. Our approach  allows us to consider the upper bound of the fundamental solution of the related parabolic equation with infinite time horizon and stochastic  coefficients $b$ and $\sigma$.
%
\end{remark}
We give the proof of Proposition \ref{le2}.
%
\begin{proof}
{It is easy to verify that the solution $X^{r,x,u,v}$ of SDE \eqref{dynamics+r} can be expressed as (for simplicity, we omit the superscript $\{x,u,v\}$ in the following proof)
\begin{equation}\nonumber
\begin{aligned}
X_s^{r}=&Y_s+\int_0^s re^{\frac{K}{2}(t-s)}dB_t^1,
\end{aligned}
\end{equation}
where
$$Y_s=xe^{-\frac{K}{2}s}+\int_0^se^{\frac{K}{2}(t-s)}[\frac{K}{2}X_t^{x,u,v}+b(X_t^{x,u,v},u_t,v_t)]dt
+\int_0^se^{\frac{K}{2}(t-s)}\sigma(X_t^{x,u,v},u_t,v_t)dB_t.$$
Recalling that each pair of admissible controls $(u,v)$ is $\mathbb{F}^B$-adapted (see \eqref{040801}), we see that $Y$ is $\mathbb{F}^B$-adapted. Then, for every Borel set $D\subseteq\mathbb{R}^n$, we have
\begin{equation}\nonumber
\begin{aligned}
&P\{{X_s^r\in D}\}=E\big[E[I_{\{X_s^r\in D\}}|\mathcal{F}_s^B]\big]=E\big[E[I_{\{y+e^{-\frac{K}{2}s}\cdot Z_s\in D\}}]_{y=Y_s}\big],
\end{aligned}
\end{equation}
where $Z_s=\int_0^sre^{\frac{K}{2}t}dB^1_t$, $s\geq 0$.
Notice that $Z_s\sim N(0,M_s)$, where
$M_s=\frac{r^2}{K}(e^{Ks}-1)I_{n\times n}$.
Therefore,
\begin{equation}\nonumber
\begin{aligned}
E[I_{\{y+ e^{-\frac{K}{2}s}\cdot Z_s\in D\}}]
=&\int_D\frac{1}{(2\pi)^\frac{n}{2}[\det(e^{-Ks}M_s)]^{\frac{1}{2}}}\cdot exp\{-\frac 1 2(z-y)^* e^{Ks}M_s^{-1} (z-y)\}dz\\
\leq &\int_D\frac{1}{\Big[2\pi  \frac{r^2}{K}(1-e^{-{K}s})\Big]^{\frac{n}{2}}}dz
= (\frac2K \pi r^2)^{-\frac n 2}[1-e^{-{K}s}]^{-\frac n 2}Leb(D).
\end{aligned}
\end{equation}}
\end{proof}
We give the estimate for $\rho^+$ and $\rho^-$ in terms of viscosity solutions of ergodic HJBI equations (\ref{pde1}) and (\ref{pde2}), respectively.
{For this, we introduce an additional assumption as follows:
$$\textbf{(H4)}\ \text{The\ diffusion\ coefficient}\ \sigma\ \text{is\  bounded,\ uniformly\ with\ respect\ to}\ (x,u,v)\in\mathbb{R}^n\times U\times V.$$
Throughout this section, we always assume that the Assumptions (H1)-(H4) hold unless indicated.}
\begin{theorem}\label{th1}
 Suppose that $w$ is Lipschitz on $\mathbb{R}^n$, then we have\\
{\rm (i)} If $(\rho,w)$ is a viscosity subsolution of equation (\ref{pde1}) then
\begin{equation}\label{equ 2.4}
\rho\leq \sup_{\beta\in\mathcal{B}}\inf_{u\in\mathcal{U}}\liminf_{T\rightarrow\infty}J(T,x,u,\beta(u)).
\end{equation}
{\rm (ii)} If $(\rho,w)$ is a viscosity supersolution of equation (\ref{pde1}) then
\begin{equation}\label{equ 2.5}
\rho\geq \sup_{\beta\in\mathcal{B}}\inf_{u\in\mathcal{U}}\limsup_{T\rightarrow\infty}J(T,x,u,\beta(u)).
\end{equation}
{\rm (iii)} If $(\rho,w)$ is a viscosity subsolution of equation (\ref{pde2}) then
\begin{equation}\label{equ 2.6}
\rho\leq \inf_{\alpha\in\mathcal{A}}\sup_{v\in\mathcal{V}}\liminf_{T\rightarrow\infty}J(T,x,\alpha(v),v).
\end{equation}
{\rm (iv)} If $(\rho,w)$ is a viscosity supersolution of equation (\ref{pde2}) then
\begin{equation}\label{equ 2.7}
\rho\geq \inf_{\alpha\in\mathcal{A}}\sup_{v\in\mathcal{V}}\limsup_{T\rightarrow\infty}J(T,x,\alpha(v),v).
\end{equation}
\end{theorem}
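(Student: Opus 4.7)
The plan is to prove part (i) in detail; the other three parts follow by entirely analogous arguments (part (ii) by swapping sup-convolution for inf-convolution and reversing the inequality, and parts (iii) and (iv) by interchanging the roles of the two players and using equation \eqref{pde2}). Assume therefore that $(\rho,w)$ is a viscosity subsolution of \eqref{pde1} and let $w$ be Lipschitz. The core idea, borrowed from \'Swiech but adapted to the ergodic setting, is to regularise $w$, apply It\^o's formula along the non-degenerate approximation $X^{r,x,u,v}$ introduced in \eqref{dynamics+r}, select a nearly optimal strategy $\beta$ for Player~2 via a measurable selection argument, and then extract the bound on $\rho$ after sending $r,\epsilon\downarrow 0$ and $T\uparrow\infty$.

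First, I would introduce the sup-convolution $w^{\epsilon}(x)=\sup_{y\in\mathbb{R}^n}\{w(y)-\tfrac{1}{2\epsilon}|x-y|^2\}$, which is Lipschitz and semi-convex, hence twice differentiable Lebesgue-almost everywhere by Alexandrov's theorem, with $\|Dw^\epsilon\|_\infty$ and the essential bounds on $D^2w^\epsilon$ controlled uniformly once $\epsilon$ is small. Standard viscosity machinery (as recorded in the appendix) yields that $(\rho, w^\epsilon)$ is a viscosity subsolution of a perturbed equation obtained from \eqref{pde1} by replacing $x$ with $x+O(\sqrt{\epsilon})$ inside $H$; by continuity and compactness of $U\times V$, the Hamiltonian inequality holds at every point of twice-differentiability of $w^\epsilon$ up to an $O(\sqrt{\epsilon})$ error. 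A further convolution with a standard mollifier produces $w^{\epsilon,\delta}\in C^\infty$ on which It\^o's formula is immediate, while the semi-convexity keeps $\|D^2 w^{\epsilon,\delta}\|_\infty\leq C/\epsilon$ uniformly in $\delta$.

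Given any Player~1 control $u\in\mathcal{U}$ and any small $\eta>0$, I would use a measurable selection theorem (Filippov-type) to pick $\beta\in\mathcal{B}$ so that, progressively in $s$,
\[
H\bigl(X_s^{r,x,u,\beta(u)},Dw^{\epsilon,\delta},D^2w^{\epsilon,\delta},u_s,\beta(u)_s\bigr)\ \geq\ \sup_{v\in V}H\bigl(X_s^{r,x,u,\beta(u)},Dw^{\epsilon,\delta},D^2w^{\epsilon,\delta},u_s,v\bigr)-\eta,
\]
which lets the subsolution inequality for $w^{\epsilon,\delta}$ be bounded below by $\rho-\eta-o_\delta(1)-O(\sqrt\epsilon)-C\|X^r-X\|$. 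Applying It\^o's formula to $w^{\epsilon,\delta}(X_t^{r,x,u,\beta(u)})$, using the generator of \eqref{dynamics+r}, taking expectations, and rearranging yields
\[
\tfrac{1}{T}E\!\int_0^T f(X_s^{x,u,\beta(u)},u_s,\beta(u)_s)\,ds\ \geq\ \rho-\tfrac{1}{T}\bigl|w^{\epsilon,\delta}(x)-Ew^{\epsilon,\delta}(X_T^{r,x,u,\beta(u)})\bigr|-\mathcal{E}(r,\epsilon,\delta,\eta),
\]
where $\mathcal{E}$ collects the error coming from the drift correction $-\tfrac{K}{2}(X^r-X)\cdot Dw^{\epsilon,\delta}$, the extra diffusion term $\tfrac{r^2}{2}\Delta w^{\epsilon,\delta}$ from $\sigma^r$, the regularisation mismatch between $f(X^r,\cdot)$ and $f(X,\cdot)$, and the sup-convolution and mollification gaps.

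Finally, I would pass to the limit in the prescribed order: first $\delta\downarrow 0$, then $T\uparrow\infty$, then $r\downarrow 0$, then $\epsilon\downarrow 0$ and $\eta\downarrow 0$. The boundary term $\tfrac{1}{T}|w^{\epsilon,\delta}(x)-Ew^{\epsilon,\delta}(X_T^{r,x,u,\beta(u)})|$ vanishes as $T\to\infty$ thanks to the linear growth of $w^{\epsilon,\delta}$ and the uniform second-moment estimate $E|X_t^{r,x,u,v}|^2\leq C(1+|x|^2)$ inherited from Lemma~\ref{estimates}. The terms involving $X^r-X$ are controlled by Lemma~\ref{le1}; taking $\sup_{\beta}\inf_u\liminf_T$ and then letting the parameters vanish in the right order delivers \eqref{equ 2.4}. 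The \textbf{main obstacle} is handling the It\^o integrand $\tfrac{r^2}{2}\Delta w^{\epsilon,\delta}$ together with the difference $f(X^r,\cdot)-f(X,\cdot)$ and the $D^2 w^{\epsilon,\delta}$ terms along the non-Markovian process $X^{r,x,u,v}$: because $\|D^2 w^{\epsilon,\delta}\|_\infty\sim\epsilon^{-1}$, one cannot simply let $r\to 0$ before $\epsilon\to 0$. This is precisely the step where Krylov's $L^{n+1}$-type estimate, used by \'Swiech in the discounted setting, fails in our ergodic-time, stochastic-coefficient setup (cf.\ Remark~\ref{re4.1}); Proposition~\ref{le2} provides the substitute by giving a uniform-in-time Lebesgue-absolute-continuity bound $P\{X_s^{r,x,u,v}\in D\}\leq C(K,r)\,Leb(D)$, which allows a quantitative $L^1$ bound on the problematic integrands on large balls, combined with the assumption (H4) and the tail control from Lemma~\ref{estimates} to handle the complement. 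The parameters $r,\epsilon,\delta$ must be coupled (for instance $\delta\ll r\ll\epsilon$) so that all error terms vanish simultaneously.
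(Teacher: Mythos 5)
Your overall architecture --- sup-convolution plus mollification, It\^o's formula, a measurable-selection feedback for the responding player, the non-degenerate approximation $X^{r}$ together with Proposition \ref{le2} to absorb the exceptional set where $D^2w^\varepsilon$ fails to exist, and a careful order of limits --- is the same as the paper's, and you correctly identify the failure of Krylov's estimate and its replacement by Proposition \ref{le2} as the crux. There are, however, two concrete gaps. First, you apply It\^o's formula to $w^{\epsilon,\delta}(X^{r,x,u,\beta(u)})$ and choose $\beta(u)_s$ ``progressively in $s$'' as a near-optimal response at the current state $X^{r,x,u,\beta(u)}_s$. Such a $\beta(u)$ is a functional of $B^1$, hence $\mathbb{F}^W$-adapted but not $\mathbb{F}^B$-adapted, so it is not an admissible control in the sense of \eqref{040801}; moreover the definition is circular ($\beta(u)_s$ depends on $X^r_s$, which depends on $\beta(u)$ on $[0,s]$), and the closed-loop equation with a merely measurable feedback $\psi$ need not be well posed. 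The paper avoids all three problems at once: It\^o's formula is applied to the original, possibly degenerate process $X^{x,u,v}$ (so no $\tfrac{r^2}{2}\Delta w^{\epsilon,\delta}$ term ever appears), and the feedback is frozen on a time grid, $v^\theta_s=\psi(X^{x,u,v^\theta}_{i\theta})$ on $[i\theta,(i+1)\theta)$, which is $\mathbb{F}^B$-adapted, well posed by induction, and whose discretization error is controlled by the third estimate of Lemma \ref{estimates}. The process $X^{r}$ enters only a posteriori, through Lemma \ref{le1} and Proposition \ref{le2}, to estimate $\frac{1}{T}E\int_0^T g_\delta(X_s)(1+|X_s|)\,ds$; it never carries the It\^o computation.

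Second, the quantities to be bounded are defined through strategies ($\sup_\beta\inf_u$ and $\inf_\alpha\sup_v$), and converting the constructed feedback control into a genuine non-anticipative strategy requires the inductive pairing $\alpha(\tilde v^{\theta})=\tilde u^{\theta}$, $\beta^{\theta}(\tilde u^{\theta})=\tilde v^{\theta}$ of \eqref{052101}; your proposal does not address this step, yet without it one only obtains an inequality over controls, not over $\mathcal{A}$ or $\mathcal{B}$. Both gaps are repairable by switching to the paper's time-discretized construction, but as written your argument produces controls outside the admissible class and therefore bounds a different (enlarged-filtration) game. A smaller remark: sending $\delta\downarrow 0$ before $T\uparrow\infty$ cannot work on its own, since $g_\delta\to 0$ only almost everywhere and the law of the degenerate $X_s$ may charge the exceptional set; the paper's order ($R\to\infty$ first, then $\delta$, $r$, $\varepsilon\to 0$ sequentially) makes your proposed coupling $\delta\ll r\ll\epsilon$ unnecessary.
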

\begin{proof}
{\bf{Step 1}}. We first prove that this theorem holds for $w\in C^3_b(\mathbb{R}^n)$. We only give the proof of (iii) and (iv), (i) and (ii) can be proved similarly.

\emph{Proof of {\rm (iii)}}. Notice that $w\in C^3_b(\mathbb{R}^n)$, $(\rho,w)$ is the subsolution of the ergodic HJBI equation (\ref{pde2}) in the classical sense, namely
\begin{equation}\label{equ 2.8}
\rho\leq \sup_{v\in V}\Lambda(x,v),\ x\in\mathbb{R}^n,
\end{equation}
where $\Lambda(x,v)=\inf_{u\in U}H(x,Dw(x),D^2w(x),u,v)$.
Since $\Lambda$ is locally Lipschitz in $x$ and continuous with respect to $v$ over the compact space $V$, there exists an $\mathcal{B}(\mathbb{R}^n)$-$\mathcal{B}(V)$ measurable mapping $\psi:\mathbb{R}^n\rightarrow V$ such that,
{for all $x\in\mathbb{R}^n$, $u\in U$,
\begin{equation}\label{equ 2.11}
\rho\leq \Lambda(x,\psi(x))\leq H(x,Dw(x),D^2w(x),u,\psi(x)).
\end{equation}
Let $u\in\mathcal{U}$ be arbitrary but fixed, and let $m$ be a positive integer large enough such that $[0,T]=\bigcup_{i=0}^{m-1}[i\theta,(i+1)\theta]$ with $\theta=\frac{T}{m}$.} We construct an adimissible control $v^{\theta}\in\mathcal{V}$
inductively over time intervals $[i\theta,(i+1)\theta)$, $i=0,1,2,\cdots,$ as follows: For $i=0$, we define
$$v^{\theta}(s)=\psi(x),\ s\in[0,\theta).$$
Then SDE (\ref{dynamics}) has a unique solution $X^{x,u,v^{\theta}}$ over $[0,\theta)$ with $v=v^{\theta}$. With this $X_\theta^{x,u,v^{\theta}}$, we
define
$$v^{\theta}(s)=\psi(X_\theta^{x,u,v^{\theta}}),\ s\in[\theta,2\theta).$$
It is easy to check that $v^{\theta}|_{[\theta,2\theta)}$ is an $\mathcal{F}^B_\theta$-adapted admissible control. We can then solve the SDE (\ref{dynamics}) on $[\theta,2\theta)$ and obtain $X_{2\theta}^{x,u,v^{\theta}}.$ Repeating the above process, we construct an admissible control
$$v^{\theta}(s)=\psi(X_{i\theta}^{x,u,v^{\theta}}),\ \text{if}\ s\in[i\theta,(i+1)\theta),$$
for Player 2.
Applying It\^o's formula to $w(X_s^{x,u,v^{\theta}})$, we get
\begin{equation}\label{equ 2.12}
\begin{aligned}
&E[w(X_{(i+1)\theta}^{x,u,v^{\theta}})-w(X_{i\theta}^{x,u,v^{\theta}})]\\
=&E\int_{i\theta}^{(i+1)\theta}\langle b(X_s^{x,u,v^{\theta}},u_s,
\psi(X_{i\theta}^{x,u,v^{\theta}})), Dw(X_s^{x,u,v^{\theta}})\rangle\\
&\qquad\qquad+\frac{1}{2}tr\big((\sigma\sigma^*)
(X_s^{x,u,v^{\theta}},u_s,\psi(X_{i\theta}^{x,u,v^{\theta}}))\cdot D^2w(X_s^{x,u,v^{\theta}})\big)ds\\
\end{aligned}
\end{equation}

\begin{equation}\nonumber
\begin{aligned}
=&E\int_{i\theta}^{(i+1)\theta}\Big(\langle b(X_s^{x,u,v^{\theta}},u_s,
\psi(X_{i\theta}^{x,u,v^{\theta}})), Dw(X_s^{x,u,v^{\theta}})\rangle\\
&\qquad\quad-\langle b(X_{i\theta}^{x,u,v^{\theta}},u_s,
\psi(X_{i\theta}^{x,u,v^{\theta}})), Dw(X_{i\theta}^{x,u,v^{\theta}})\rangle\\
&\qquad\quad+\frac{1}{2}tr\big((\sigma\sigma^*)(X_s^{x,u,v^{\theta}},
u_s,\psi(X_{i\theta}^{x,u,v^{\theta}}))\cdot D^2w(X_s^{x,u,v^{\theta}})\big)\\
&\qquad\quad-\frac{1}{2}tr\big((\sigma\sigma^*)(X_{i\theta}^{x,u,v^{\theta}},
u_s,\psi(X_{i\theta}^{x,u,v^{\theta}}))\cdot D^2w(X_{i\theta}^{x,u,v^{\theta}})\big)\\
&\qquad\quad+f(X_s^{x,u,v^{\theta}},u_s,\psi(X_{i\theta}^{x,u,v^{\theta}}))
-f(X_{i\theta}^{x,u,v^{\theta}},u_s,\psi(X_{i\theta}^{x,u,v^{\theta}}))\Big)ds\\
&+E\int_{i\theta}^{(i+1)\theta}H(X_{i\theta}^{x,u,v^{\theta}},
Dw(X_{i\theta}^{x,u,v^{\theta}}),D^2w(X_{i\theta}^{x,u,v^{\theta}}),u_s,
\psi(X_{i\theta}^{x,u,v^{\theta}}))ds\\
&-E\int_{i\theta}^{(i+1)\theta}f(X_s^{x,u,v^{\theta}},u_s,\psi(X_{i\theta}^{x,u,v^{\theta}}))ds.
\end{aligned}
\end{equation}
Combining Assumption (H4), (\ref{equ 2.11}) and \eqref{051401},  it follows from \eqref{equ 2.12} that
\begin{equation}\label{equ 2.13}
\begin{aligned}
&E[w(X_{(i+1)\theta}^{x,u,v^{\theta}})-w(X_{i\theta}^{x,u,v^{\theta}})]\\
\geq& -C\theta \big(E[\sup_{i\theta \leq s\leq (i+1)\theta}|X_s^{x,u,v^{\theta}}-X_{i\theta}^{x,u,v^{\theta}}|^2]\big)^\frac 1 2+
\rho\cdot\theta
-E\int_{i\theta}^{(i+1)\theta}f(X_s^{x,u,v^{\theta}},u_s,
\psi(X_{i\theta}^{x,u,v^{\theta}}))ds\\
\geq& -C\theta (\theta+\theta^{\frac{1}{2}})+
\rho\cdot\theta-E\int_{i\theta}^{(i+1)\theta}f(X_s^{x,u,v^{\theta}},u_s,
\psi(X_{i\theta}^{x,u,v^{\theta}}))ds.
\end{aligned}
\end{equation}
Taking $\sum\limits_{i=0}^{m-1}$ on both sides of (\ref{equ 2.13}), we obtain
\begin{equation}\label{equ 2.14}
E[w(X_{T}^{x,u,v^{\theta}})-w(x)]
\geq -CT (\theta+\theta^{\frac{1}{2}})+
\rho\cdot T-E\int_{0}^{T}f(X_s^{x,u,v^{\theta}},u_s,v_s^{\theta})ds.
\end{equation}
Let $m=[T^2]$. Thus, dividing by $T$ on both sides and letting $T\rightarrow \infty$, we have
\begin{equation}\label{equ 2.16}
\rho\leq \liminf_{T\rightarrow\infty}J(T,x,u,v^{\theta}).
\end{equation}
Given any $\alpha\in\mathcal{A}$,  we construct an admissible strategy $\beta^{\theta}\in\mathcal{B}$ and a pair of admissible controls $(\tilde{u}^{\theta},\tilde{v}^{\theta})\in\mathcal{U}\times\mathcal{V}$ inductively on the intervals $[i\theta,(i+1)\theta)$, $i=1,2,\cdots.$ For $ s\in[0,\theta)$, we define
$$\tilde{v}^{\theta}_s=\psi(x),\ \tilde{u}^{\theta}_s=\alpha(\tilde{v}^{\theta})_s,\ \beta^{\theta}(u)_s=\tilde{v}^{\theta}_s,\ \text{for}\ u\in\mathcal{U}.$$
This allows us to obtain $X^{x,\tilde{u}^{\theta},\tilde{v}^{\theta}}|_{[0,\theta)}$, the solution of SDE \eqref{dynamics} associated with $\tilde{u}^{\theta},\tilde{v}^{\theta}$ over the time interval $[0,\theta)$. With $\beta^{\theta},\tilde{u}^{\theta},\tilde{v}^{\theta}$ and $X^{x,\tilde{u}^{\theta},\tilde{v}^{\theta}}$ on the interval $[0,i\theta)$, we define, for $u\in\mathcal{U}$,
\begin{equation}\label{112001} \tilde{v}^{\theta}_s
=\psi(X_{i\theta}^{x,\tilde{u}^{\theta}|_{[0,i\theta)},
\tilde{v}^{\theta}|_{[0,i\theta)}}),\  \tilde{u}^{\theta}_s=\alpha(\tilde{v}^{\theta})_s,\
\beta^{\theta}(u)_s=\tilde{v}^{\theta}_s,\ s\in[i\theta,(i+1)\theta).
\end{equation}
It is easy to check that  $\beta^{\theta}\in\mathcal{B}$, $(\tilde{u}^{\theta},\tilde{v}^{\theta})\in\mathcal{U}\times\mathcal{V}$ and that the following relation holds
\begin{equation}\label{052101}
\alpha(\tilde{v}^{\theta})=\tilde{u}^{\theta},\ \beta^{\theta}(\tilde{u}^{\theta})=\tilde{v}^{\theta},\ dsdP\text{-a.e., on}\ [0,\infty).
\end{equation}
Then, for any $\alpha\in\mathcal{A}$, from (\ref{equ 2.16}) with choosing $u=\tilde{u}^{\theta}$ (thus $v^{\theta}$ in \eqref{equ 2.16}  equals $\tilde{v}^{\theta}$, see \eqref{112001}), we conclude that
\begin{equation}\nonumber
 \rho\leq \liminf_{T\rightarrow\infty}
 J(T,x,\alpha(\tilde{v}^{\theta}),\tilde{v}^{\theta})
 \leq \sup_{v\in\mathcal{V}}\liminf_{T\rightarrow\infty}J(T,x,\alpha(v),v).
\end{equation}
From the arbitrariness of $\alpha$, we get
\begin{equation}\nonumber
 \inf_{\alpha\in\mathcal{A}}\sup_{v\in\mathcal{V}}\liminf_{T\rightarrow\infty}J(T,x,\alpha(v),v)
 \geq \rho.
\end{equation}

\noindent\emph{Proof of {\rm (iv)}}. Similarly to (\ref{equ 2.8}), we have
\begin{equation}\nonumber
\rho\geq \sup_{v\in V}\inf_{u\in U}H(x,Dw(x),D^2w(x),u,v)\geq \inf_{u\in U}H(x,Dw(x),D^2w(x),u,v),\ (x,v)\in\mathbb{R}^n\times V.
\end{equation}
Then there exists a measurable  $\psi:\mathbb{R}^n\times V\rightarrow U$ such that,
for all $v\in V$, $x\in\mathbb{R}^n$,
\begin{equation}\nonumber
\rho\geq H(x,Dw(x),D^2w(x),\psi(x,v),v).
\end{equation}
This is similar to \eqref{equ 2.11}, but here the mapping $\psi$ also relies on the control variable $v$.
Let $\theta>0$ be a small constant. Similarly to the construction of $\beta^{\theta}$ in the proof of (iii),
we  construct $\alpha^{\theta}\in\mathcal{A}$ as follows. Define
$$\alpha^{\theta}(v)_s=\psi(x,v_s),\ s\in[0,\theta],\ v\in \mathcal{V}.$$
Then we have the unique solution $X^{x,\alpha^{\theta}(v),v}$ of SDE (\ref{dynamics}) on $[0,\theta)$.
Assuming that both strategy $\alpha^{\theta}$ and solution $X^{x,\alpha^{\theta}(v),v}$ have been obtained on $[0,i\theta]$, we extend their definition to the interval  $[0,(i+1)\theta]$. For this, we define
$$\alpha^{\theta}(v)_s=\psi(X^{x,\alpha^{\theta}(v),v}_{i\theta},v_s),\ s\in[i\theta,(i+1)\theta],\ v\in\mathcal{V}.$$
It is easy to check that $\alpha^{\theta}\in\mathcal{A}$.
Similarly to (\ref{equ 2.16}) summing up over $1\leq i\leq m=[T^2]$ and taking the limit as $T\rightarrow\infty$, we get that for any $v\in\mathcal{V}$,
\begin{equation}\nonumber
\rho\geq \limsup_{T\rightarrow\infty}J(T,x,\alpha^{\theta}(v),v).
\end{equation}
From the arbitrariness of $v\in\mathcal{V}$, we have
\begin{equation}\nonumber
\rho\geq \inf_{\alpha\in\mathcal{A}}\sup_{v\in\mathcal{V}}\limsup_{T\rightarrow\infty}J(T,x,\alpha(v),v).
\end{equation}

\noindent {\bf{Step 2}}. We now consider the general case, i.e., $w$ is only Lipschitz on $\mathbb{R}^n$. We give the proof of (iii) only, while the arguments for (i), (ii), and (iv) are analogous.

Let $(\rho,w)$ be a viscosity subsolution of (\ref{pde2}).
For $\varepsilon\in(0,1]$, let $w^\varepsilon$ be the sup-convolution of $w$, that is,
\begin{equation}\label{equ 2.20}
w^\varepsilon(x)=\sup_{y\in\mathbb{R}^n}\{w(y)-\frac{|x-y|^2}{2\varepsilon}\}.
\end{equation}
Since $w$ is Lipschitz, it is straightforward to check that $w^\varepsilon$ converges to $w$ uniformly in $\mathbb{R}^n$ as $\varepsilon\rightarrow 0$ and
$w^\varepsilon$ is of linear growth for any fixed $\varepsilon>0$, Lipschitz continuous, semiconvex with semiconvexity constant $\frac{1}{2\varepsilon}$ (i.e., $w^\varepsilon(x)+\frac{|x|^2}{2\varepsilon}$ is convex) (see \cite{S1996} for more details, or also \cite{LL1986}). It follows from Lemma \ref{Appendix} in the Appendix that $(\rho,w^\varepsilon)$ is a viscosity subsolution of the following PDE
\begin{equation}\label{equ 022801}
\rho=\sup_{v\in V}\inf_{u\in U}H(x,Dw^\varepsilon(x),D^2w^\varepsilon(x),u,v)
+{k(\varepsilon)},
\end{equation}
where $k(\varepsilon)\rightarrow 0$, as $\varepsilon\rightarrow 0$.
Since $w^\varepsilon$ is semiconvex, from Theorem A.2 in \cite{CIL1992}, we see that $w^\varepsilon$ is twice differentiable almost
everywhere on $\mathbb{R}^n$. This indicates that $w^\varepsilon$ satisfies a.e. on $\mathbb{R}^n$
\begin{equation}\label{equ 2.25}
\rho\leq \sup_{v\in V}\inf_{u\in U}H(x,Dw^\varepsilon(x),D^2w^\varepsilon(x),u,v)+
{k(\varepsilon)}.
\end{equation}
For any given $\delta>0$, let $w^\varepsilon_\delta$ be the smooth approximation of $w^\varepsilon$, i.e.,
\begin{equation}\label{equ 2.26}
w^\varepsilon_\delta(x)=\frac{1}{\delta^n}\int_{\mathbb{R}^n} w^\varepsilon(y)\cdot\varphi(\frac{x-y}{\delta})dy,
\end{equation}
where $\varphi$ is a standard mollifier in $\mathbb{R}^n$ with compact support and satisfying $\int_{\mathbb{R}^n}\varphi(x)dx=1$.
Then  $w^\varepsilon_\delta\in C_b^3(\mathbb{R}^n)$, $w^\varepsilon_\delta$ converges uniformly to $w^\varepsilon$  on $\mathbb{R}^n$, and $Dw^\varepsilon_\delta$, $D^2w^\varepsilon_\delta$ converge to $Dw^\varepsilon$, $D^2w^\varepsilon$ for a.e. $x\in\mathbb{R}^n$, respectively, as $\delta\rightarrow 0$. Moreover, $w_\delta^\varepsilon$ has the same Lipschitz and semiconvexity constants as $w^\varepsilon$.
Then,
 we find that $w_\delta^\varepsilon$ satisfies, on $\mathbb{R}^n$
\begin{equation}\label{equ 2.27}
\rho\leq \sup_{v\in V}\inf_{u\in U}H(x,Dw^\varepsilon_\delta(x),D^2w^\varepsilon_\delta(x),u,v)
+{k(\varepsilon)}+{g_\delta(x)},
\end{equation}
where
\begin{equation}\label{112401}
g_\delta(x)=\Big\{\rho- \sup_{v\in V}\inf_{u\in U}H(x,Dw^\varepsilon_\delta(x),D^2w^\varepsilon_\delta(x),u,v)
-{k(\varepsilon)}\Big\}^+.
\end{equation}
Then it is easy to check from  $w_\delta^\varepsilon\in C_b^3(\mathbb{R}^n)$, \eqref{equ 2.25}, and \eqref{112401} that $(g_\delta)_{\delta>0}$  are continuous in $x$, of linear growth in $x$, uniformly in $\delta$ (due to the uniform semiconvexity and uniform Lipschitz continuity of $w^\varepsilon_\delta$),
 and $g_\delta(x)\rightarrow 0$, as $\delta\rightarrow 0$, for a.e. $x\in\mathbb{R}^n$.
Without loss of generality, we assume that $g_\delta$ is Lipschitz continuous. Otherwise we can construct a sequence of Lipschitz continuous functions to approximate $g_\delta$ (see Lemma 3.1 in \cite{LPL2016}).
For $(x,p,A,u,v)\in \mathbb{R}^n\times\mathbb{R}^n\times \mathcal{S}^{n\times n}\times U\times V$, we denote
$$H^{\delta,\varepsilon}(x,p,A,u,v)={\frac{1}{2}tr\big((\sigma\sigma^{*})(x,u,v)\cdot A\big)}+\langle b(x,u,v), p\rangle+f^{\delta,\varepsilon}(x,u,v),$$
where $f^{\delta,\varepsilon}(x,u,v)=f(x,u,v)+{k(\varepsilon)}
+{g_\delta(x)}.$
Then it follows from (\ref{equ 2.27}) that
\begin{equation}\label{equ 030801}
\rho\leq \sup_{v\in V}\inf_{u\in U}H^{\delta,\varepsilon}(x,Dw^\varepsilon_\delta(x),D^2w^\varepsilon_\delta(x),u,v).
\end{equation}
Thus, applying the results of Step 1 to $w_\delta^\varepsilon$,
we obtain from (\ref{equ 030801}), that
\begin{equation}\label{2022041301}
\begin{aligned}
\rho&\leq\inf_{\alpha\in\mathcal{A}}\sup_{v\in\mathcal{V}}\liminf_{T\rightarrow\infty}\frac{1}{T}
E[\int_0^Tf^{\delta,\varepsilon}({{X_s^{x,\alpha(v),v}},\alpha(v)_s,v_s})ds]\\
&=\inf_{\alpha\in\mathcal{A}}\sup_{v\in\mathcal{V}}
\liminf_{T\rightarrow\infty}\frac{1}{T}
E[\int_0^T\big(f({X_s^{x,\alpha(v),v}},\alpha(v)_s,v_s)
+{g_\delta(X_s^{x,\alpha(v),v})}\big)ds]
+k(\varepsilon)\\
&\leq \inf_{\alpha\in\mathcal{A}}\sup_{v\in\mathcal{V}}
\liminf_{T\rightarrow\infty}\frac{1}{T}
E[\int_0^T\big(f({X_s^{x,\alpha(v),v}},\alpha(v)_s,v_s)
+{g_\delta(X_s^{r,x,\alpha(v),v})}\big)ds]
+C\sqrt{n}r+k(\varepsilon),
\end{aligned}
\end{equation}
where the last inequality is obtained from Lemma \ref{le1}.
We now give the estimate to the term
\begin{equation}\nonumber
\frac{1}{T}E[\int_0^Tg_\delta(X_s^{r,x,\alpha(v),v})ds].
\end{equation}
Let  $R>0$ be arbitrarily fixed and $E_R$ be a Borel subset of $\mathbb{R}^n$ with $Leb(E_R)\leq \frac 1 R$, such that $g_\delta\rightarrow 0$ uniformly on $B_R(0)\backslash E_R$. Here $B_R(0)$ denotes the open ball of radius $R$ centred at $0$. In fact, from \eqref{112401} it is easy to check that there exists some modulus $h(\delta)$ such that {$|g_\delta(x)|\leq h(\delta)(1+|x|)$} on $B_R(0)\backslash E_R$ and $h(\delta)\rightarrow 0$, as $\delta\rightarrow 0$.
Noting that $g_\delta$ is of linear growth, from the Markov inequality and \eqref{112402} it follows that there exists a constant $C>0$ (independent of the controls $u$ and $v$) such that
\begin{equation}\label{051301}
\begin{aligned}
&\frac 1 T E[\int_0^TI_{\{|X_s^{r,x,\alpha(v),v}|>{R}\}}\cdot g_\delta({X_s^{r,x,\alpha(v),v}})ds]
\leq {\frac C T E[\int_0^TI_{\{{|X_s^{r,x,\alpha(v),v}|>{R}}\}}\cdot (1+|{X_s^{r,x,\alpha(v),v}}|)ds]}\\
\leq &{\frac C T\Big(\int_0^TP(|X_s^{r,x,\alpha(v),v}|>R)ds\Big)^\frac 1 2\Big(\int_0^T (1+E|{X_s^{r,x,\alpha(v),v}|^2})ds\Big)^\frac 1 2} \leq \frac{C}{R}.
\end{aligned}
\end{equation}
From Propositions \ref{le2} and \eqref{112402} we get that for any $T>1$,
\begin{equation}\label{equ 032502}
\begin{aligned}
&\frac{1}{T}E[\int_0^TI_{\{X_s^{r,x,\alpha(v),v}\in E_{R}\}}\cdot g_\delta(X_s^{r,x,\alpha(v),v})ds]\\
\leq& \frac{1}{T}E[\int_0^1I_{\{X_s^{r,x,\alpha(v),v}\in E_{R}\}}\cdot g_\delta(X_s^{r,x,\alpha(v),v})ds]+\frac{1}{T}E[\int_1^TI_{\{X_s^{r,x,\alpha(v),v}\in E_{R}\}}\cdot g_\delta(X_s^{r,x,\alpha(v),v})ds]\\
\leq& \frac{C}{T}E[\int_0^1 (1+|X_s^{r,x,\alpha(v),v}|)ds]+{\frac{C}{T}\int_1^TE[I_{\{X_s^{r,x,\alpha(v),v}\in E_{R}\}}\cdot(1+|X_s^{r,x,\alpha(v),v}|)]ds}\\
\leq&\frac{C}{T}+{\frac{C}{T}\int_1^T\Big(P\{X_s^{r,x,\alpha(v),v}\in E_{R}\}\Big)^\frac 1 2 ds}
\leq \frac CT+\frac{C}{r^{\frac n 2}R^\frac12}.
\end{aligned}
\end{equation}
 Therefore, from \eqref{051301} and \eqref{equ 032502}, we finally get
\begin{equation}\label{051302}
\begin{aligned}
&\frac 1 T E[\int_0^Tg_\delta(X_s^{r,x,\alpha(v),v})ds]
\leq \frac{C}{R}+\frac 1 T E[\int_0^TI_{\{|X_s^{r,x,\alpha(v),v}|\leq {R}\}}\cdot g_\delta(X_s^{r,x,\alpha(v),v})ds]\\
 \leq& \frac{C}{R}+\frac 1 T E[\int_0^TI_{\{X_s^{r,x,\alpha(v),v}\in B_R(0)\backslash E_R\}}\cdot h(\delta)\cdot{(1+|X^{r,x,\alpha(v),v}_s|)}ds]+\frac{C}{T}+\frac{C}{r^{\frac n 2}R^\frac12}\\
 \leq&  \frac{C}{R}+ Ch(\delta)+\frac{C}{T}+\frac{C}{r^{\frac n 2}R^\frac12}.
\end{aligned}
\end{equation}
Substituting \eqref{051302} into \eqref{2022041301}, we finally get
\begin{equation}\nonumber
\begin{aligned}
\rho\leq\inf_{\alpha\in\mathcal{A}}\sup_{v\in\mathcal{V}}
\liminf_{T\rightarrow\infty}\frac{1}{T}
E[\int_0^Tf(X_s^{x,\alpha(v),v},\alpha(v)_s,v_s)ds]
+{\frac{C}{R}+ Ch(\delta)+\frac{C}{r^{\frac n 2}R^\frac12}
+C\sqrt{n}r+k(\varepsilon)}.
\end{aligned}
\end{equation}
By first letting $R\rightarrow\infty$ and then sending $\delta,r,\varepsilon\rightarrow 0$ sequentially, we obtain
\begin{equation}\nonumber
\begin{aligned}
\rho\leq\inf_{\alpha\in\mathcal{A}}\sup_{v\in\mathcal{V}}\liminf_{T\rightarrow\infty}\frac{1}{T}E[\int_0^Tf(X_s^{x,\alpha(v),v},\alpha(v)_s,v_s)ds]=\inf_{\alpha\in\mathcal{A}}\sup_{v\in\mathcal{V}}\liminf_{T\rightarrow\infty}
J(T,x,\alpha(v),v).
\end{aligned}
\end{equation}
\end{proof}

\begin{remark}\label{re-Krylov estimate}
We comment that the basic idea of the proof of Theorem \ref{th1} for the ergodic case is adapted from that of Theorem 2.1 in \cite{S1996} for the discounted case (the so-called Abel mean). However, there are some differences between these two situations. First of all, our construction of a type of non-degenerate diffusion processes is a family of linear SDEs, which is different from the approach used in \cite{S1996}. Secondly, our admissible controls are allowed to be progressively measurable with respect to the original filtration generated by the Brownian motion $B$, as explained in Remark \ref{re-original admissible controls}. Thirdly, their assumptions about the coefficients $b,\sigma,f$ in \cite{S1996} are stronger than those required in this paper. Finally, the approach used in \cite{S1996} cannot be applied directly to our ergodic case. Indeed,
note that estimate \eqref{equ 032502} is derived by using Proposition 4.1, while a similar estimate is obtained in \cite{S1996} (see the last line on Page 11 there) for a finite horizon situation  via a well-known estimate by  Krylov of the distribution of a stochastic integral (see, Theorem 4 on Page 66 in \cite{K1980}).
 We emphasise here that  Krylov's estimate does not work for our ergodic SDGs framework.
In fact, recall that the classical Krylov estimate states that for all Borel functions $g(x)$, it holds that, for $p\geq n$ and all Borel sets $E_R$,
\begin{equation}\nonumber
E[\int_0^\infty e^{-\lambda t}\cdot I_{\{X_s^{r,x,\alpha(v),v}\in E_R\}}\cdot|g(X_s^{r,x,\alpha(v),v})|ds]\leq N\cdot (\int_{E_R}|g(x)|^pdx)^{\frac1p},
\end{equation}
where the constant $N$ depends only on the constants $n, p, \lambda, R $ and $r$. Then this estimate is applied in  \cite{S1996} to a finite time horizon case. However, this estimate does not work anymore for the term
\begin{equation}\nonumber
\lim_{T\rightarrow\infty}\frac{1}{T}E[\int_0^T  I_{\{X_s^{r,x,\alpha(v),v}\in E_R\}}\cdot|g(X_s^{r,x,\alpha(v),v})|ds]
\end{equation}
which is equivalent to (see Remark \ref{re-AT-theorem})
\begin{equation}\nonumber
\lim_{\lambda\rightarrow 0}\lambda E[\int_0^\infty e^{-\lambda t} I_{\{X_s^{r,x,\alpha(v),v}\in E_R\}}\cdot|g(X_s^{r,x,\alpha(v),v})|ds],
\end{equation}
since the constant $N$ depending on $\lambda$ and $\lambda N$ could tend to infinity as $\lambda$ goes to $0.$
\end{remark}
As an immediate consequence of Theorem \ref{th1} we obtain the following comparison
principle and the uniqueness result for the ergodic HJBI equation (\ref{pde2}). A similar result holds  for the ergodic HJBI equation (\ref{pde1}).

\begin{corollary}\label{co1}
{\rm (i)} Let $(\rho_1,w_1)$ and $(\rho_2,w_2)$ be  a viscosity subsolution and supersolution of
(\ref{pde2}), respectively. If $w_1$ and $w_2$ are Lipschitz, it holds true that $\rho_1\leq \rho_2$.\\
{\rm (ii)} Let $(\rho_1,w_1)$ and $(\rho_2,w_2)$ be two viscosity solutions of
(\ref{pde2}), then $\rho_1=\rho_2$. Moreover, assume that {$w_1$ and $w_2$ are Lipschitz, and for some positive constant $R$ (which will be refined in Remark \ref{re-uniqueness}),}
\begin{equation}\label{2020122401}
w_1= w_2,\ \text{on}\ \bar{B}_{R}(0).
\end{equation}
Then $w_1= w_2$ on $\mathbb{R}^n$.
\end{corollary}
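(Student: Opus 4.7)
Part (i) is a direct consequence of Theorem \ref{th1}. Applying part (iii) to the Lipschitz subsolution $(\rho_1, w_1)$ of \eqref{pde2} gives $\rho_1 \leq \inf_{\alpha \in \mathcal{A}} \sup_{v \in \mathcal{V}} \liminf_{T \to \infty} J(T, x, \alpha(v), v)$, and part (iv) applied to the Lipschitz supersolution $(\rho_2, w_2)$ gives $\rho_2 \geq \inf_{\alpha \in \mathcal{A}} \sup_{v \in \mathcal{V}} \limsup_{T \to \infty} J(T, x, \alpha(v), v)$. Since the pointwise inequality $\liminf_T \leq \limsup_T$ is preserved by the $\sup_{v}$ and $\inf_{\alpha}$ operations, we conclude $\rho_1 \leq \rho_2$.

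For part (ii), swapping the roles of $(\rho_1, w_1)$ and $(\rho_2, w_2)$ in (i) immediately yields $\rho_1 = \rho_2 =: \rho$, so the remaining task is to prove $w_1 \equiv w_2$. The plan is to invoke the analogue of Theorem \ref{character1} adapted to \eqref{pde2}: by the uniqueness (in the linear-growth class) of the viscosity solution of the terminal-value problem obtained from \eqref{050102} by replacing $\inf\sup$ with $\sup\inf$ and imposing terminal data $w_i$---cf.\ Theorem 5.3 in \cite{BL2008}---each $w_i$ is representable, for every $T > 0$, as
\begin{equation*}
w_i(x) = \inf_{\alpha \in \mathcal{A}} \sup_{v \in \mathcal{V}} E\Big[\int_0^T \big(f(X_s^{x, \alpha(v), v}, \alpha(v)_s, v_s) - \rho\big)\, ds + w_i(X_T^{x, \alpha(v), v})\Big].
\end{equation*}
A standard $\varepsilon$-optimal-strategy argument (pick $\alpha^\varepsilon$ that nearly realises the infimum for $w_1(x)$, insert it into the defining expression for $w_2(x)$, exploit $\sup_v F_v - \sup_v G_v \leq \sup_v(F_v - G_v)$, swap the roles of $w_1, w_2$, and let $\varepsilon \to 0$) produces
\begin{equation*}
|w_1(x) - w_2(x)| \leq \sup_{\alpha \in \mathcal{A},\, v \in \mathcal{V}} E\big|w_1(X_T^{x, \alpha(v), v}) - w_2(X_T^{x, \alpha(v), v})\big|.
\end{equation*}

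To close the argument I exploit the coincidence on $\bar{B}_R(0)$: since $w_1, w_2$ are Lipschitz with some common constant $L$ and agree on $\bar{B}_R(0)$, we have $|w_1(y) - w_2(y)| \leq 2L(|y| - R)^+$ for every $y \in \mathbb{R}^n$. Combining this with the second-moment estimate $E|X_T^{x,u,v}|^2 \leq C(1 + |x|^2 e^{-cT})$ from Lemma \ref{estimates}, Cauchy--Schwarz together with Markov's inequality yields
\begin{equation*}
|w_1(x) - w_2(x)| \leq \frac{2LC(1 + |x|^2 e^{-cT})}{R}.
\end{equation*}
Passing to $T \to \infty$ and invoking the refinement from Remark \ref{re-uniqueness}, which specifies how $R$ is to be chosen (sufficiently large, or taken arbitrarily), forces $w_1 \equiv w_2$ on $\mathbb{R}^n$.

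The two main obstacles I anticipate are: (a) a careful justification of the time-dependent representation for $w_1$ and $w_2$---while formally symmetric to Theorem \ref{character1}, one must verify that the Lipschitz/linear-growth class in which $w_i$ lies actually falls within the scope of the uniqueness theorem of \cite{BL2008} for the "$\sup\inf$" terminal problem; and (b) upgrading the bound $|w_1 - w_2| \leq 2LC/R$ obtained in the $T \to \infty$ limit into genuine equality, which is precisely where the refinement in Remark \ref{re-uniqueness} must be invoked.
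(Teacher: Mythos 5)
Part (i) and the deduction $\rho_1=\rho_2$ at the start of part (ii) are correct and coincide with the paper's argument (both are immediate from Theorem \ref{th1}). The representation formula you invoke for $w_i$ is also legitimate -- it is essentially Theorem \ref{DPP}(ii), which the paper proves independently by stopping the proof of Theorem \ref{th1} at finite $T$ -- and the resulting estimate $|w_1(x)-w_2(x)|\leq \sup_{\alpha,v}E|w_1(X_T^{x,\alpha(v),v})-w_2(X_T^{x,\alpha(v),v})|$ is fine.

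The gap is in the final step. The constant $R$ in hypothesis \eqref{2020122401} is \emph{not} a free parameter that can be ``taken arbitrarily'' or sent to infinity: Remark \ref{re-uniqueness} fixes it as the explicit value $R=\bigl(\tilde{b}+\sqrt{\tilde{b}^2+K\tilde{\sigma}^2}\bigr)/K$ determined by the coefficients, and the coincidence $w_1\equiv w_2$ is assumed only on that one ball. Consequently your bound $|w_1(x)-w_2(x)|\leq 2LC/R$ (after $T\to\infty$) is a fixed positive number and does not yield $w_1\equiv w_2$; the second-moment estimate $E|X_T|^2\leq C$ only gives $P(|X_T|>R)\leq C/R^2$, which does not vanish for fixed $R$, so no amount of letting $T\to\infty$ closes the argument. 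The paper proceeds quite differently: writing $w=w_1-w_2$, it shows $w(x)\leq\epsilon|x|^2$ for every $\epsilon>0$ by a maximum-principle argument on $\{|x|\geq R\}$. The sub/supersolution inequalities give $\sup_{u,v}\{\tfrac12 tr(\sigma\sigma^*D^2w)+\langle b,Dw\rangle\}\geq 0$ everywhere, while the dissipativity condition (H3) makes $|x|^2$ a strict Lyapunov function outside $B_R(0)$, i.e. $\sup_{u,v}\{\tfrac12 tr(\sigma\sigma^*D^2|x|^2)+\langle b,D|x|^2\rangle\}<0$ for $|x|>R$ -- this is exactly where the explicit $R$ comes from. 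Together these force any maximizer of $w(x)-\epsilon|x|^2$ over $\{|x|\geq R\}$ to lie on the sphere $|x|=R$, where $w=0$, whence $w(x)-\epsilon|x|^2\leq-\epsilon R^2<0$; letting $\epsilon\downarrow 0$ gives $w_1\leq w_2$ and symmetry finishes. To repair your proof you would need to replace the crude tail estimate by an argument of this Lyapunov/maximum-principle type (or otherwise exploit that the role of $R$ is tied to (H3), not to largeness).
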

\begin{proof}
We only give the proof of (ii) since statement (i) can be obtained directly from Theorem \ref{th1}. The uniqueness of $\rho$ is obtained from (i), so we only need to prove $w_1=w_2$. For this, we consider only the case $w_1,w_2\in C_b^3(\mathbb{R}^n)$, the general case follows similarly as in Step 2 of the proof of Theorem \ref{th1}.

{We now only need to prove $w_1(x)\leq w_2(x)$, on $\mathbb{R}^n$, when $(\rho,w_1)$ (resp. $(\rho,w_2)$) is a subsolution (resp. supersolution) of \eqref{pde2}, since it is easy to check that $w_1(x)\geq w_2(x)$ by using a symmetric argument.
For simplicity, we denote $w(x)=w_1(x)-w_2(x)$, $x\in\mathbb{R}^n$. To complete the proof, we are going to show
\begin{equation}\label{2021013101}
w(x)\leq \epsilon |x|^2,\ x\in\mathbb{R}^n,
\end{equation}
for any $\epsilon>0$. Then we obtain the desired result by letting $\epsilon\rightarrow 0.$}

{Notice that $w(x)\equiv 0$, when $|x|\leq R$, we only need to show that \eqref{2021013101} holds when $|x|\geq R.$ Since $w(x)-\epsilon |x|^2$ is bounded from above when $|x|\geq R$, there exists $x_0\in\mathbb{R}^n$ with $|x_0|\geq R$ such that
\begin{equation}\label{2021013102}
w(x_0)-\epsilon |x_0|^2=\max_{ x\in (B_R(0))^c}(w(x)-\epsilon |x|^2).
\end{equation}
We claim that the point $x_0$ satisfies $|x_0|=R$. Otherwise, if the point $x_0$ satisfies $|x_0|>R$  it would follow from \eqref{2021013102}  that
$$D(w(x)-\epsilon |x|^2)|_{x=x_0}=0,$$
$$ D^2(w(x)-\epsilon |x|^2)|_{x=x_0}\leq 0.$$
Therefore, we have
\begin{equation}\label{2021013103}
\begin{aligned}
&\sup_{u\in U,\ v\in V}\{\frac{1}{2}tr(\sigma\sigma^*(x,u,v)D^2(w(x)-\epsilon |x|^2))+\langle b(x,u,v),D(w(x)-\epsilon |x|^2)\rangle\}_{x=x_0}\\
=&\sup_{u\in U,\ v\in V}\{\frac{1}{2}tr(\sigma\sigma^*(x,u,v)D^2(w(x)-\epsilon |x|^2))\}_{x=x_0}\leq 0,\ \text{with}\ |x_0|>R.
\end{aligned}
\end{equation}
On the other hand, since $(\rho,w_1)$ (resp. $(\rho,w_2)$) is a subsolution (resp. supersolution) of \eqref{pde2}, we get
$$
\rho\leq \sup_{ v\in V}\inf_{u\in U}\{\frac{1}{2}tr(\sigma\sigma^*(x,u,v)D^2w_1(x) )+\langle b(x,u,v),Dw_1(x)\rangle+f(x,u,v)\},$$
and
$$\rho\geq \sup_{ v\in V}\inf_{u\in U}\{\frac{1}{2}tr(\sigma\sigma^*(x,u,v)D^2w_2(x) )+\langle b(x,u,v),Dw_2(x)\rangle+f(x,u,v)\},
$$
which means that, for all $x\in\mathbb{R}^n$,
\begin{equation}\label{2021013104}
 \sup_{u\in U, v\in V}\{\frac{1}{2}tr(\sigma\sigma^*(x,u,v)D^2w(x) )+\langle b(x,u,v),Dw(x)\rangle\}\geq 0.
\end{equation}
Noting that $D|x|^2=2x,$ $D^2|x|^2=2 I_{n\times n}$, there exists a constant $R>0$, such that for all $x\in\mathbb{R}^n$ with $|x|> R$,
\begin{equation}\label{2021013105}
\begin{aligned}
& \sup_{u\in U, v\in V}\{\frac{1}{2}tr(\sigma\sigma^*(x,u,v)D^2|x|^2 )+\langle b(x,u,v),D|x|^2\rangle\}\\
 =& \sup_{u\in U, v\in V}\{\|\sigma(x,u,v)\|^2+2\langle x,(b(x,u,v)-b(0,u,v))\rangle+2\langle x,b(0,u,v)\rangle\}\\
\leq & {\tilde{\sigma}^2-K| x|^2+2|x|\tilde{b}}
<0,\\
 \end{aligned}
\end{equation}
where $\tilde{b}:=\sup_{(u,v)\in{U}\times V}|b(0,u,v)|$, $\tilde{\sigma}:=\sup_{(x,u,v)\in\mathbb{R}^n\times{U}\times V}\|\sigma(x,u,v)\|$. Combining \eqref{2021013104} and \eqref{2021013105}, we get
\begin{equation}\nonumber
 \sup_{u\in U, v\in V}\{\frac{1}{2}tr(\sigma\sigma^*(x,u,v)D^2(w(x)-\epsilon |x|^2) )+\langle b(x,u,v),D(w(x)-\epsilon |x|^2)\rangle\}> 0,\ \text{for all}\ |x|> R,
\end{equation}
which contradicts \eqref{2021013103}. Therefore, $x_0$ satisfies $|x_0|=R$.}

Finally, from \eqref{2021013102} we conclude that, for all $|x|\geq R$,
$$w(x)-\epsilon |x|^2\leq w(x_0)-\epsilon R^2=-\epsilon R^2<0,$$
which implies that $w(x)<\epsilon |x|^2$. Letting $\epsilon\downarrow 0$, we get $w_1(x)\leq w_2(x),\ x\in\mathbb{R}^n.$
\end{proof}
\begin{remark}
We remark that the idea of the proof of Corollay \ref{co1} is inspired by the uniqueness proof given in \cite{BBP} (see the proof of Theorem 3.5 therein), where they divided the proof into the following three steps:\\
\emph{(1)} Show that $w(x)=w_1(x)-w_2(x)$ is a viscosity subsolution of the associated partial differential system;\\
\emph{(2)} Build a suitable  smooth supersolution $\chi(x)$ of this system;\\
\emph{(3)} Show that  $w(x)\leq \varepsilon \chi(x)$ in $\mathbb{R}^n$.\\
 Different from their work, we consider a type of ergodic HJBI equations and apply a family of functions with adequate regularities to   approximate our viscosity solution. As a result, we only need to consider the classical subsolution $w_1(x)$ $($resp. supersolution $w_2(x)$$)$ such that $w\in C_b^3(\mathbb{R}^n)$ and conclude directly the result $w(x)\leq \varepsilon \chi(x)$ in $\mathbb{R}^n$ by choosing $\chi(x)=|x|^2$. In this sense,
 our uniqueness proof looks more simplified than those used in \cite{BBP}.
\end{remark}
\begin{remark}\label{re-uniqueness}
It is easy to verify from \eqref{2021013105} that constant $R$ assumed in Corollay \ref{co1} can be taken
\begin{equation}\label{060601}
R:=\frac{\tilde{b}+\sqrt{\tilde{b}^2+K\tilde{\sigma}^2}}{K},
\end{equation}
with $\tilde{b}=\sup_{(u,v)\in{U}\times V}|b(0,u,v)|$, $\tilde{\sigma}=\sup_{(x,u,v)\in\mathbb{R}^n\times{U}\times V}\|\sigma(x,u,v)\|$.
Obviously, the constant $R$ depends only on the constants $\tilde{b}$, $\tilde{\sigma}$, $K$, and
the constant $R$ will be smaller when the constant $K$ increases.


It is worth pointing out that an additional condition {\eqref{2020122401}}
is necessary to obtain the uniqueness result for the ergodic HJBI equation \eqref{pde2}. In fact, for all constants $C$,  it is obvious that $(\rho,w+C)$ are the classical solutions of the ergodic HJBI \eqref{pde2} if $(\rho,w)$ is a classical solution.

On the other hand, when the diffusion $\sigma(x,u,v)\equiv 0$, our SDGs will reduce  to a class of deterministic differential games, and the ergodic HJBI equation \eqref{pde2} will be a type of HJI equation. In this situation, condition \eqref{2020122401} will be written as
\begin{equation}\label{060603}
w_1=w_2\ \text{on}\ \bar{B}_R(0),\ \text{with}\ R=\frac{2\tilde{b}}{K},
\end{equation}
 since $\tilde{\sigma}=0$ in \eqref{060601}.
Note that Ghosh and  Rao \cite{GR2005}  imposed another necessary condition for the deterministic case, which states that
\begin{equation}\label{060602}
w_1=w_2\ \text{on\ the\ set}\ Z=\{z:z=\lim_{t_n\rightarrow\infty}X_{t_n}^{x,u,v},\ (u,v)\in U\times V\},
\end{equation}
in order to ensure the uniqueness of the viscosity solution of the related HJI equation. We emphasise that our sufficient condition \eqref{060603} cannot directly imply their condition \eqref{060602} and vice versa in this special situation.

We also remark that the condition \eqref{2020122401} for the general situation $\sigma\neq 0$ may not be the weakest condition to guarantee the uniqueness result for such type of ergodic HJBI equations.  We leave it as a future research project to improve this condition.
\end{remark}
\begin{theorem}\label{th2}
Suppose that the classical Isaacs condition
$$\inf_{u\in U}\sup_{v\in V}H(x,p,A,u,v)=\sup_{v\in V}\inf_{u\in U}H(x,p,A,u,v),\ (x,p,A)\in\mathbb{R}^n\times\mathbb{R}^n\times\mathcal{S}(n),$$
holds. Then our SDG with ergodic payoff has a value.
\end{theorem}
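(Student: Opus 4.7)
The plan is to combine Theorem 3.1 with all four parts of Theorem 4.1, exploiting the fact that the Isaacs condition collapses equations \eqref{pde1} and \eqref{pde2} into a single PDE. Since the Hamiltonians of both equations agree pointwise under the Isaacs condition, the equations \eqref{pde1} and \eqref{pde2} are literally identical, so any viscosity solution of one is a viscosity solution of the other.

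By Theorem 3.1, there exists a pair $(\rho,w)$ with $w$ Lipschitz that is a viscosity solution of \eqref{pde1}; under the Isaacs condition this same pair is also a viscosity solution of \eqref{pde2}. I would then apply Theorem 4.1 to this single pair four times. Parts (i) and (ii) of Theorem 4.1 give
\begin{equation*}
\rho \;\leq\; \sup_{\beta\in\mathcal{B}}\inf_{u\in\mathcal{U}}\liminf_{T\to\infty}J(T,x,u,\beta(u))
\;\leq\; \sup_{\beta\in\mathcal{B}}\inf_{u\in\mathcal{U}}\limsup_{T\to\infty}J(T,x,u,\beta(u))
\;=\;\rho^+(x)\;\leq\;\rho,
\end{equation*}
so $\rho^+(x)\equiv\rho$ for every $x\in\mathbb{R}^n$. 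Similarly, parts (iii) and (iv) give
\begin{equation*}
\rho\;\leq\; \inf_{\alpha\in\mathcal{A}}\sup_{v\in\mathcal{V}}\liminf_{T\to\infty}J(T,x,\alpha(v),v)
\;=\;\rho^-(x)\;\leq\; \inf_{\alpha\in\mathcal{A}}\sup_{v\in\mathcal{V}}\limsup_{T\to\infty}J(T,x,\alpha(v),v)\;\leq\;\rho,
\end{equation*}
so $\rho^-(x)\equiv\rho$ as well. Consequently $\rho^+(x)=\rho^-(x)=\rho$ for all $x\in\mathbb{R}^n$, which is the definition of the existence of a value.

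There is essentially no obstacle: all the heavy lifting has already been done in Theorems 3.1 and 4.1. The only subtlety to verify is that the viscosity solution pair $(\rho,w)$ produced by Theorem 3.1 has $w$ Lipschitz (so that the hypothesis of Theorem 4.1 applies), which is already built into the conclusion of Theorem 3.1 via the estimate \eqref{042102}. One remark worth making in the write-up is that the argument does not require the uniqueness Corollary 4.1; the coincidence of $\rho^+$ and $\rho^-$ follows directly from using the same solution pair for both sets of estimates, and a posteriori Corollary 4.1 tells us that $\rho$ is the unique scalar that arises in this way.
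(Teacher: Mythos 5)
Your proposal is correct and follows essentially the same route as the paper: existence of a Lipschitz viscosity solution from Theorem \ref{th3.1}, the four estimates of Theorem \ref{th1}, and the observation that the Isaacs condition makes \eqref{pde1} and \eqref{pde2} coincide. The only (harmless) difference is that the paper takes two a priori distinct solution pairs $(\rho_1,w_1)$ and $(\rho_2,w_2)$ for the two equations and then invokes Corollary \ref{co1}-(i) to conclude $\rho_1=\rho_2$, whereas you use a single pair for both equations and thereby bypass the corollary, as you correctly note.
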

\begin{proof}
From Theorem \ref{th3.1}, we know that the ergodic HJBI equation \eqref{pde1} (resp. \eqref{pde2}) has a Lipschitz viscosity solution $(\rho_1,w_1)$ (resp. $(\rho_2,w_2)$). Then we get from Theorem \ref{th1} that
\begin{equation}\nonumber
\begin{aligned}
\rho_1\geq \sup_{\beta\in\mathcal{B}}\inf_{u\in\mathcal{U}}\limsup_{T\rightarrow\infty}J(T,x,u,\beta(u))(=\rho^+(x))\geq \sup_{\beta\in\mathcal{B}}\inf_{u\in\mathcal{U}}\liminf_{T\rightarrow\infty}J(T,x,u,\beta(u))\geq \rho_1,\\
\rho_2\geq \inf_{\alpha\in\mathcal{A}}\sup_{v\in\mathcal{V}}\limsup_{T\rightarrow\infty}J(T,x,\alpha(v),v)\geq \inf_{\alpha\in\mathcal{A}}\sup_{v\in\mathcal{V}}\liminf_{T\rightarrow\infty}J(T,x,\alpha(v),v)(=\rho^-(x))\geq \rho_2.\\
\end{aligned}
\end{equation}
This implies that $\rho_1=\rho^+(x)$, $\rho_2=\rho^-(x)$, for $x\in\mathbb{R}^n$.
Under the Isaacs condition, the ergodic HJBI equations $(\ref{pde1})$ and
$(\ref{pde2})$ coincide. Then from Corollary \ref{co1}-(i), we get $\rho_1=\rho_2$, which implies the existence of the value for  our SDG with ergodic payoff since $\rho^+(x)=\rho^-(x)$, for $x\in\mathbb{R}^n$.
\end{proof}
The following theorem  is similar to Theorem \ref{th1}, but in the framework of the Abel mean. Its  proof  is similar to that of Theorem \ref{th1}. For the reader's convenience, we give a sketch proof of (iii), the proofs of (i), (ii) and (iv) are analogous.
\begin{theorem}\label{th3}
Suppose that $w$ is Lipschitz on $\mathbb{R}^n$, then we have\\
{\rm (i)} If $(\rho,w)$ is a viscosity subsolution of equation (\ref{pde1}) then
\begin{equation}\label{equ 032701}
\rho\leq \sup_{\beta\in\mathcal{B}}\inf_{u\in\mathcal{U}}\liminf_{\lambda\rightarrow0^+}\lambda E[\int_0^\infty e^{-\lambda s}f(X_s^{x,u,\beta(u)},u_s,\beta(u)_s)ds].
\end{equation}
{\rm (ii)} If $(\rho,w)$ is a viscosity supersolution of equation (\ref{pde1}) then
\begin{equation}\label{equ 032702}
\rho\geq \sup_{\beta\in\mathcal{B}}\inf_{u\in\mathcal{U}}\limsup_{\lambda\rightarrow0^+}\lambda E[\int_0^\infty e^{-\lambda s}f(X_s^{x,u,\beta(u)},u_s,\beta(u)_s)ds].
\end{equation}
{\rm (iii)} If $(\rho,w)$ is a viscosity subsolution of equation (\ref{pde2}) then
\begin{equation}\label{equ 032703}
\rho\leq\inf_{\alpha\in\mathcal{A}}\sup_{v\in\mathcal{V}}\liminf_{\lambda\rightarrow0^+}\lambda E[\int_0^\infty e^{-\lambda s}f(X_s^{x,\alpha(v),v},\alpha(v)_s,v_s)ds].
\end{equation}
{\rm (iv)} If $(\rho,w)$ is a viscosity supersolution of equation (\ref{pde2}) then
\begin{equation}\label{equ 032704}
\rho\geq\inf_{\alpha\in\mathcal{A}}\sup_{v\in\mathcal{V}}\limsup_{\lambda\rightarrow0^+}\lambda E[\int_0^\infty e^{-\lambda s}f(X_s^{x,\alpha(v),v},\alpha(v)_s,v_s)ds].
\end{equation}
\end{theorem}
\begin{proof}
Step 1. We show that \eqref{equ 032703} holds for $w\in C_b^3(\mathbb{R}^n)$.

We notice that \eqref{equ 2.11} and $v^{\theta}\in\mathcal{V}$ constructed in the proof of Theorem \ref{th1} still hold in this case. For any fixed $T>0$, $m=[T^2]$ and $\theta=\frac{T}{m}$, arbitrary but fixed $u\in \mathcal{U}$, applying It\^o's formula to $e^{-\lambda s}w(X_s^{x,u,v^{\theta}})$, similar to \eqref{equ 2.13}, we have
\begin{equation}\nonumber
\begin{aligned}
&E[e^{-\lambda (i+1)\theta}w(X_{(i+1)\theta}^{x,u,v^\theta})-e^{-\lambda i\theta}w(X_{i\theta}^{x,u,v^\theta})]\\
=&E\int_{i\theta}^{(i+1)\theta}e^{-\lambda s}\Big(-\lambda w(X_s^{x,u,v^\theta})+\langle b(X_s^{x,u,v^\theta},u_s,v_s^{\theta}), Dw(X_s^{x,u,v^\theta})\rangle\\
&\qquad\qquad\qquad\qquad +\frac{1}{2}tr\big((\sigma\sigma^*)(X_s^{x,u,v^\theta},u_s,v_s^{\theta})\cdot D^2w(X_s^{x,u,v^\theta})\big)\Big)ds\\
\geq& -\lambda E[\int_{i\theta}^{(i+1)\theta}e^{-\lambda s}w(X_{s}^{x,u,v^\theta})ds]
+\rho\int_{i\theta}^{(i+1)\theta}e^{-\lambda s}ds\\
&-CE[\sup_{i\theta\leq s\leq (i+1)\theta}|X_s^{x,u,v^\theta}-X_{i\theta}^{x,u,v^\theta}|^2]^{\frac 1 2}\int_{i\theta}^{(i+1)\theta}e^{-\lambda s}ds-E[\int_{i\theta}^{(i+1)\theta}e^{-\lambda s}f(X_s^{x,u,v^\theta},u_s,v_s^{\theta})ds]\\
\geq& \big(-\lambda C+\rho-C(\theta+\theta^{\frac 1 2})\big)\int_{i\theta}^{(i+1)\theta}e^{-\lambda s}ds-E[\int_{i\theta}^{(i+1)\theta}e^{-\lambda s}f(X_s^{x,u,v^\theta},u_s,v_s^{\theta})ds].
\end{aligned}
\end{equation}
Taking $\sum\limits_{i=0}^{m-1}$ and multiplying  $\lambda$ on both sides, we get
$$
\lambda E[e^{-\lambda T}w(X^{x,u,v^{\theta}}_T)]-\lambda w(x)\geq \big(-\lambda C+\rho-C(\theta+\theta^{\frac 1 2})\big)(1-e^{-\lambda T})-\lambda E[\int_{0}^{T}e^{-\lambda s}f(X_s^{x,u,v^{\theta}},u_s,v_s^{\theta})ds].$$
Letting $T\rightarrow\infty$ and $\lambda\rightarrow 0$, we have
$$\rho\leq\liminf_{\lambda\rightarrow 0^+} \lambda E[\int_{0}^{\infty}e^{-\lambda s}f(X_s^{x,u,v^{\theta}},u_s,v_s^{\theta})ds].$$
For every $\alpha\in\mathcal{A}$, similar to \eqref{052101} there exist  $\beta^{\theta}\in\mathcal{B}$ and $(\tilde{u}^{\theta},\tilde{v}^{\theta})\in\mathcal{U}\times\mathcal{V}$ such that $\alpha(\tilde{v}^{\theta})=\tilde{u}^{\theta}$, $\beta^{\theta}(\tilde{u}^{\theta})=\tilde{v}^{\theta}$, $dsdP$-a.e., on $[0,\infty)$.
Therefore,
$$\rho\leq \inf_{\alpha\in\mathcal{A}}\sup_{v\in\mathcal{V}}\liminf_{\lambda\rightarrow 0^+}\lambda E[\int_{0}^{\infty}e^{-\lambda s}f(X_s^{x,\alpha(v),v},\alpha(v)_s,v_s)ds].$$
Step 2. The general case.

We construct an approximation sequence $w_\delta^\varepsilon\in C_b^3(\mathbb{R}^n)$ of $w$ in a same manner as defined in \eqref{equ 2.20} and \eqref{equ 2.26}. Then combining Step 1 and \eqref{equ 030801}, similar to \eqref{2022041301} we obtain
\begin{equation}\label{equ 032705}
\begin{aligned}
\rho
\leq&\inf_{\alpha\in\mathcal{A}}\sup_{v\in\mathcal{V}}\liminf_{\lambda\rightarrow 0^+}\lambda E[\int_{0}^{\infty}e^{-\lambda s}\big(f({X_s^{x,\alpha(v),v}},\alpha(v)_s,v_s)
+g_\delta({X_s^{x,\alpha(v),v})\cdot(1+|X_s^{x,\alpha(v),v}|)}\big)ds]
+{k(\varepsilon)}.
\end{aligned}
\end{equation}
Similarly to \eqref{051302}, we obtain that, for all $R>0$,{
\begin{equation}\label{032709}
\begin{aligned}
\lambda E[\int_{0}^{\infty}e^{-\lambda s}g_\delta(X_s^{x,\alpha(v),v})ds]\leq \frac{C}{R}+ Ch(\delta)+C(1-e^{-\lambda})+\frac{C}{r^{\frac n 2}}\frac{1}{\sqrt{R}}+C\sqrt{n}r.
\end{aligned}
\end{equation}}
From \eqref{equ 032705} and \eqref{032709}, we conclude that
\begin{equation}\nonumber
\rho
\leq\inf_{\alpha\in\mathcal{A}}\sup_{v\in\mathcal{V}}\liminf_{\lambda\rightarrow 0^+}\lambda E[\int_{0}^{\infty}e^{-\lambda s}f(X_s^{x,\alpha(v),v},\alpha(v)_s,v_s)ds]+\frac{C}{R}+Ch(\delta)+\frac{C}{r^{\frac n 2}}\frac{1}{\sqrt{R}}+C\sqrt{n}r+k(\varepsilon).
\end{equation}
By first letting $R\rightarrow \infty$, and then sending $r,\delta,\varepsilon\rightarrow 0$ sequentially, we obtain the desired result.
\end{proof}
We observe from the proof of Theorem \ref{th1} that inequality \eqref{equ 2.4} still holds if we change the order of $\sup_{\beta\in\mathcal{B}}\inf_{u\in\mathcal{U}}$  and $\liminf_{T\rightarrow\infty}$. Similar situations also hold for \eqref{equ 2.5}-\eqref{equ 2.7} and for the results of Theorem \ref{th3}.
\begin{remark}\label{re-AT-theorem}
Let $(\rho,w)$ be a viscosity solution of equation (\ref{pde1}). From  Theorem \ref{th1}, Corollary \ref{co1}-\emph{(ii)} and Theorem \ref{th3} we get the following
\begin{equation}\nonumber
\begin{aligned}
&\sup_{\beta\in\mathcal{B}}\inf_{u\in\mathcal{U}}\lim_{T\rightarrow\infty}\frac 1 T E \int_0^Tf(X_s^{x,u,\beta(u)},u_s,\beta(u)_s)ds(=\rho)\\
=&\sup_{\beta\in\mathcal{B}}\inf_{u\in\mathcal{U}}\lim_{\lambda\rightarrow 0^+}\lambda E\int_0^\infty e^{-\lambda s}f(X_s^{x,u,\beta(u)},u_s,\beta(u)_s)ds.
\end{aligned}
\end{equation}
This property can be seen as a generalisation of the
 classical Abelian-Tauberian theorem (see \cite{AB2003, S1979}), stating that
$$\lim_{T\rightarrow\infty}\frac 1 T \int_0^T\varphi(t)dt=\lim_{\lambda\rightarrow 0^+}\lambda \int_0^\infty\varphi(t)e^{-\lambda t}dt,$$
if either one of the two limits exists, to the  SDG cases.
\end{remark}
Finally, we present a dynamic programming principle for the viscosity solution of \eqref{pde1} and \eqref{pde2}.
\begin{theorem}\label{DPP}
Suppose that (H1)-(H2) and (H4) are satisfied. Then, the following statements hold:
\\
{\rm (i)} If $(\rho,w)$ is a viscosity solution of (\ref{pde1}), then for any $T>0$,
\begin{equation}\nonumber
w(x)=\sup_{\beta\in\mathcal{B}}\inf_{u\in\mathcal{U}} E[\int_0^T f(X_s^{x,u,\beta(u)},u_s,\beta(u)_s)ds+w(X_T^{x,u,\beta(u)})]-\rho T.
\end{equation}
{\rm (ii)} If $(\rho,w)$ is a viscosity solution of (\ref{pde2}), then for any $T>0$,
\begin{equation}\nonumber
w(x)=\inf_{\alpha\in\mathcal{A}}\sup_{v\in\mathcal{V}} E[\int_0^T f(X_s^{x,\alpha(v),v},\alpha(v)_s,v_s)ds+w(X_T^{x,\alpha(v),v})]-\rho T.
\end{equation}
\end{theorem}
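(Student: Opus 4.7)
The plan for part (i) is to reduce the dynamic programming identity to the uniqueness of the finite-horizon terminal-value HJBI equation \eqref{050102} and then apply its known stochastic representation. Since $(\rho, w)$ is a viscosity solution of the stationary equation \eqref{pde1}, rewriting $\rho$ on the left as $\inf_u \sup_v\{H(x, Dw, D^2w, u, v) - \rho\} = 0$ shows that the time-independent map $(t,x) \mapsto w(x)$ is automatically a viscosity solution of \eqref{050102} on $[0,T] \times \mathbb{R}^n$ with terminal datum $w$. By Theorem 5.3 of \cite{BL2008}, this terminal-value problem admits a unique viscosity solution in the class of functions of at most linear growth, and that solution is given by the SDG representation \eqref{050103}. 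Uniqueness therefore forces
\begin{equation*}
w(x) = \sup_{\beta \in \mathcal{B}} \inf_{u \in \mathcal{U}} E\Bigl[\int_0^T \bigl(f(X_s^{x,u,\beta(u)}, u_s, \beta(u)_s) - \rho\bigr)\,ds + w(X_T^{x,u,\beta(u)})\Bigr],
\end{equation*}
and pulling the constant $-\rho$ out of the integral gives the claimed identity.

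For part (ii), the argument is perfectly symmetric. A viscosity solution of \eqref{pde2} is a time-independent viscosity solution of the analogous terminal-value PDE in which $\inf_{u\in U}\sup_{v\in V}$ is replaced by $\sup_{v\in V}\inf_{u\in U}$; Theorem 5.3 of \cite{BL2008} (applied to this alternative equation) identifies the unique solution with the lower value of the corresponding finite-horizon SDG, represented via $\inf_{\alpha \in \mathcal{A}} \sup_{v \in \mathcal{V}}$. Extracting $-\rho T$ from the integral then yields (ii).

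The main technical point to watch is the linear-growth hypothesis required for the BL2008 uniqueness theorem. Under (H1)-(H2), (H4) alone this growth is not automatic, since the bound \eqref{042102} in Theorem \ref{th3.1} was obtained using (H3); however the DPP is applied to viscosity solutions produced by Theorem \ref{th3.1}, which do satisfy \eqref{042102}, so this fits the context. If one wishes to avoid invoking PDE uniqueness entirely, an alternative and self-contained route is to apply It\^o's formula directly to the sup- and inf-convolutions $w^\varepsilon_\delta \in C^3_b(\mathbb{R}^n)$ of $w$ on the fixed interval $[0,T]$, following verbatim the construction of the piecewise controls $v^\theta$ (resp.\ $\alpha^\theta$) from the proof of Theorem \ref{th1} but without the final normalization by $T$ and passage $T \to \infty$. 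This would produce both the $\leq$ and $\geq$ directions of the DPP, with the $g_\delta$-error control handled by Proposition \ref{le2} and Lemma \ref{le1} exactly as in Step~2 of the proof of Theorem \ref{th1}. Either route should close the argument; the PDE-uniqueness route is shorter and seems preferable.
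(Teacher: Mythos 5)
Your primary route is correct, but it is genuinely different from the proof the paper gives for this theorem. The paper proves Theorem \ref{DPP} by the route you list only as an ``alternative'': it reruns the argument of Theorem \ref{th1} (measurable selection $\psi$, piecewise-constant controls/strategies on a grid of mesh $\theta$, It\^o's formula applied to the mollified sup-convolution $w^\varepsilon_\delta$, the error control via Lemma \ref{le1} and Proposition \ref{le2}) and simply stops at the fixed horizon $T$ in \eqref{equ 2.14} instead of dividing by $T$ and letting $T\to\infty$; this yields the inequality $\geq$, and the reverse inequality is obtained symmetrically. Your preferred route --- observing that the time-independent function $w$ is a viscosity solution of the terminal-value problem \eqref{050102} and invoking the uniqueness and stochastic representation from Theorem 5.3 of \cite{BL2008} --- is not the proof given for Theorem \ref{DPP}, but it is exactly the argument the paper itself uses to derive \eqref{050103} inside the proof of Theorem \ref{character1}, and \eqref{050103} is literally statement (i) after extracting $-\rho T$ from the integral; so the route is sanctioned by the paper's own standards. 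The trade-off is as you identify it: the PDE-uniqueness route is much shorter but needs $w$ to lie in the uniqueness class (at most linear growth) for the finite-horizon Isaacs equation, a hypothesis not written into the statement of Theorem \ref{DPP} but satisfied by the solutions produced in Theorem \ref{th3.1}; the paper's direct route avoids the parabolic comparison theorem but implicitly needs $w$ Lipschitz for the sup-convolution machinery, and its Step-2 ingredients (Lemma \ref{le1}, Proposition \ref{le2}) reference the constant $K$ from (H3) even though the paper's remark stresses that (H3) is not needed here --- a wrinkle your route sidesteps entirely, since the \cite{BL2008} results are finite-horizon statements that do not use dissipativity.
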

\begin{proof}
The proof of this Theorem is the same as that of Theorem \ref{th1} without letting $T\rightarrow\infty$, we only give the proof of (ii), since (i) can be proved similarly.

If $(\rho,w)$ is a viscosity solution of (\ref{pde2}), taking the supremum over all controls $v$ and infimum over all strategies $\alpha$ on both sides of  \eqref{equ 2.14} we obtain
\begin{equation}\nonumber
\inf_{\alpha\in\mathcal{A}}\sup_{v\in\mathcal{V}}E[w(X_{T}^{x,\alpha(v),v})
+\int_{0}^{T}f(X_s^{x,\alpha(v),v},\alpha(v)_s,v_s)ds]
\geq w(x)-CT (\theta+\theta^{\frac{1}{2}})+\rho T.
\end{equation}
Letting {$m\rightarrow\infty$ (i.e., $\theta\rightarrow 0+$)}, we get
\begin{equation}\nonumber
\inf_{\alpha\in\mathcal{A}}\sup_{v\in\mathcal{V}}E[w(X_{T}^{x,\alpha(v),v})+\int_{0}^{T}f(X_s^{x,\alpha(v),v},\alpha(v)_s,v_s)ds]
\geq w(x)+\rho T.
\end{equation}
The other inequality can be proved analogously.
\end{proof}
\begin{remark}
It is worth pointing out that  assumption {\rm (H3)} is not required in Theorem \ref{DPP}, while this condition is necessary in Theorem \ref{th1}. This is mainly because  (H3) guarantees the boundedness of the state process over the infinite time horizon $[0,\infty)$ (see Lemma \ref{estimates}), then the term
$\frac{1}{T}E[w(X_T^{x,u,v^\theta})-w(x)]$ in \eqref{equ 2.14} tends to $0$ as $T\rightarrow \infty$. However, for Theorem \ref{DPP} we consider the related DPP for fixed finite $T$. 
\end{remark}
\section{\protect \large Application in pollution accumulation problems with long-run average welfare}
We apply our results to study a type of pollution accumulation and consumption problem, which has been a major topic of dynamic economics and environmental policy making (see \cite{KM2007, KSZ1972, P1972}). We consider a situation where an economy consumes some goods and generates pollutions through its consumption. Suppose that the pollution stock, which gradually reduces, is described by the scalar quantity $X_t=\max\{Y_t,0\}$ at time $t$, where $Y$ is a controlled diffusion process given by
\begin{equation}\label{2020103101}
dY_t=[u_t-v_tY_t]dt+{\sigma(Y_t)} dB_t,\ Y_0=x>0.
\end{equation}
Here,  the processes $u_t$ and $v_t$ represent the rate of consumption and  the  decay rate of pollution at time $t$, respectively, {$\sigma:\mathbb{R}\rightarrow\mathbb{R}^d$ is a bounded Lipschitz function.  Thus, the process $Y$ may be degenerate.}
We comment that the decay rate of the pollution is no longer a constant as considered in \cite{KM2007} since there exists some uncertainty on the values of this decay rate (see \cite{JL2014} for more details). Let us point out that the control  $v$  can be seen as a parameter and interpreted as the natural cleaning of pollution through winds, rains, etc., which is obviously unknown and
unobservable to us.
Assume that the control state spaces are
 $$U=[0,\gamma],\ V=[a,b],$$
where the constant $\gamma$ is usually imposed by worldwide protocols (e.g., Kyoto Protocol) in order to promote sustainable development, $a$ and $b$ are given positive constants.

The associated long-run average social welfare is given by the utility of the net consumption of the disutility of pollution, i.e.
\begin{equation}\label{2020103102}
J(x,u,v)=\liminf_{T\rightarrow\infty}\frac{1}{T}E[\int_0^T\big(g(u_t)-f(X_t)\big)dt],
\end{equation}
where $g\in C^2(0,\infty)$ and $f\in C(0,\infty)$ represent the utility of consumption and the disutility function of pollution, respectively. Moreover, they are assumed to satisfy the following properties:
$$g'>0,\ g''<0,\ g'(\infty)=g(0)=0,\ g'(0+)=g(\infty)=\infty;$$
$$f(x)\geq 0,\ \text{convex\ and\ Lipschitz}.$$
The objective is to find an optimal consumption rate $u^*$ to maximise long-run average social welfare $J(x,u,v)$ under the worst-case scenario (or robust control) $v^*$.

When the decay rate is constant, such a problem has been studied by many authors, among others, Kawaguchi and Morimoto \cite{KM2007}, Nguyen and Yin \cite{NY2016} with a switching diffusion system. In this situation, it can be regarded as a stochastic control problem and the optimal consumption rate can be derived from the related Hamilton-Jacobi-Bellman equation. When the decay rate is no longer constant,
we address this problem via our two-player infinite time horizon SDG model with one controller (controlling $u$) against ``nature" (controlling $v$), where the dynamics is given by \eqref{2020103101}, and the ergodic payoff criterion $J$ has the following equivalent formulation:
 \begin{equation}\nonumber
J(x,u,v)=\liminf_{T\rightarrow\infty}\frac{1}{T}E[\int_0^T\Big(g(u_t)-f(\frac{Y_t+|Y_t|}{2})\Big)dt].
\end{equation}
It is worth pointing out that a similar formulation was also studied in \cite{JL2014}, in which  some additional conditions such as non-degenerate, Lyapunov stability and feedback control conditions were required in order to study the robust optimal consumption rate. The advantage of our approach is that these additional assumptions are not necessary, which makes our approach better applicable.

The related Hamiltonian function $H$ has the following form
 \begin{equation}\nonumber
H(y,p,A,u,v)=[pu+g(u)]-ypv+{\frac 1 2 |\sigma(y)|^2 A}-f(\frac{y+|y|}{2}),\ (y,p,A,u,v)\in \mathbb{R}\times\mathbb{R}\times\mathbb{R}\times U\times V.
\end{equation}
It is easy to verify that the assumptions (H1)-(H3) and the Isaac condition hold. Since $H$ is continuous on compact metric spaces $U$ and $ V$, there exist  mappings $\bar{u}:\mathbb{R}\rightarrow U$ and $\bar{v}:\mathbb{R}\times\mathbb{R}\rightarrow V$ such that
  \begin{equation}\label{2020110203}
\bar{u}(p)=\argsup_{u\in U}[pu+g(u)],\ \bar{v}(y,p)=\arginf_{v\in V}(-ypv).
\end{equation}
 Then applying Theorem \ref{th2}, we get the following results.
\begin{theorem}\label{th4}
Let $(\rho,w)$ be a viscosity solution of the  ergodic HJBI equation
  \begin{equation}\nonumber
\rho=\sup_{u\in U}\inf_{v\in V}H(y,Dw(y),D^2w(y),u,v),\ y\in\mathbb{R}.
\end{equation}
Then $\rho$ is the value for the long-run average social welfare associated with \eqref{2020103101} and \eqref{2020103102}. In addition, the optimal consumption rate  and the worst case decay rate of pollution are given by the following feedback form
  \begin{equation}\nonumber
u^*(y)=\bar{u}(Dw(y)),\ v^*(y)=\bar{v}(y,Dw(y)),
\end{equation}
where the mappings $\bar{u}$ and $\bar{v}$ are given in \eqref{2020110203}.
\end{theorem}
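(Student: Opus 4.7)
The plan is to obtain the result as a direct consequence of the general theory developed in Sections 3 and 4. The argument splits naturally into three steps: verifying the hypotheses for this concrete model, invoking the existence results for the ergodic HJBI equation together with the game-value theorem, and finally identifying the optimal feedback through measurable selectors.

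First, I would check that the coefficients fall under (H1)--(H4). The drift $b(y,u,v)=u-yv$ is linear, continuous in $(u,v)\in U\times V$, Lipschitz in $y$ (hence (H1) and (H2)), and satisfies $2(y-y')(b(y,u,v)-b(y',u,v))=-2v(y-y')^2\leq -2a(y-y')^2$, so (H3) holds with $K=2a$, under the implicit normalization $2a>C_\sigma^2$. Boundedness and Lipschitz continuity of $\sigma$ give (H2) and (H4). For the Isaacs condition, the Hamiltonian $H(y,p,A,u,v)=[pu+g(u)]-ypv+\tfrac12|\sigma(y)|^2A-f(\tfrac{y+|y|}{2})$ is separable in the two controls, the $u$-dependent term $[pu+g(u)]$ and the $v$-dependent term $-ypv$ being independent; hence $\sup_u\inf_v H=\inf_v\sup_u H$.

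Second, Theorem \ref{th3.1} yields a viscosity solution $(\rho,w)$ of the ergodic HJBI equation with $w$ Lipschitz and of linear growth. Theorem \ref{th2} then gives $\rho^+(x)=\rho^-(x)=\rho$, so the long-run average welfare game admits the scalar value $\rho$, which disposes of the first assertion.

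Third, for the optimal feedback, continuity of $H$ in $(u,v)$ on the compact spaces $U=[0,\gamma]$ and $V=[a,b]$ together with standard measurable-selection theorems provides Borel maps $\bar u$, $\bar v$ as in \eqref{2020110203}. When $w$ is smooth, applying It\^o's formula to $w(Y_s)$ along the trajectory driven by the feedback $(u^*(y),v^*(y))=(\bar u(Dw(y)),\bar v(y,Dw(y)))$ cancels the Hamiltonian exactly; averaging over $[0,T]$ and letting $T\to\infty$, the argument of the proof of Theorem \ref{th1}(iii)--(iv) identifies $(u^*,v^*)$ as a saddle point attaining the game value $\rho$.

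The main obstacle, exactly as in Step 2 of the proof of Theorem \ref{th1}, is that $w$ is only Lipschitz: $Dw$ is defined merely almost everywhere, so the feedback SDE need not admit a classical strong solution and the pointwise saddle relation cannot be used directly. I would therefore regularise $w$ by the sup- and inf-convolution procedures \eqref{equ 2.20}--\eqref{equ 2.26}, produce $w^\varepsilon_\delta\in C_b^3$, build the feedback from $Dw^\varepsilon_\delta$, and pass to the limit by combining the error control of \eqref{equ 2.27} with the non-degenerate approximating processes $X^{r,x,u,v}$ from Lemma \ref{le1} and the density bound of Proposition \ref{le2}, exactly as in the passage to the limit $R\to\infty$, $\delta,r,\varepsilon\to 0$ performed there. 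This replaces the pointwise saddle relation by an asymptotic one with vanishing error and confirms that the formal feedback $(u^*,v^*)$ realises the value $\rho$.
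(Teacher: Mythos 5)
Your proposal is correct and follows essentially the same route as the paper, whose entire argument consists of checking (H1)--(H3) and the Isaacs condition (the latter holding by the separability of $H$ in $u$ and $v$, as you note) and then citing Theorem \ref{th2}. Your additional remarks --- the implicit requirement $2a>C_\sigma^2$ for (H3), and the careful treatment of the feedback optimality via the $\theta$-discretised strategies and the sup-convolution/regularisation machinery of Theorem \ref{th1} --- are in fact more explicit than the paper, which simply asserts the feedback form without further justification.
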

\begin{remark}
{\rm (i)} Since the stock of pollution is nonnegative, thus we consider directly the following constrained equation
\begin{equation}\label{2020110301}
\rho=\sup_{u\in U}\inf_{v\in V}H(x,Dw(x),D^2w(x),u,v),\ x\in\mathbb{R}^+.
\end{equation}
We can still obtain the existence of the viscosity solution by vanishing the limit in the discounted cases, which is similar to Theorem \ref{th3.1}.

{\rm (ii)} If we take $g(u)=2u^{\frac 1 2}$, $f(x)=d\cdot x$ with some constant $d>0$, {we calculate from \eqref{2020110203} that
$$\bar{u}(p)=|\text{Proj}_{[0,\sqrt{\gamma}]}(-\frac{1}{p})|^2I_{\{p<0\}}+\gamma I_{\{p\geq 0\}},\ \bar{v}(y,p)=bI_{\{yp>0\}}+aI_{\{yp\leq 0\}},$$
from which} it is easy to check that
$$\rho=-\frac d a dist^2(\frac a d, [0,\sqrt{\gamma}])+\frac a d,\ w(x)=-\frac d a x,$$
is a classical solution of the ergodic HJBI equation \eqref{2020110301}. In this situation, the optimal consumption rate $u^*=Proj^2_{[0,\sqrt{\gamma}]}(\frac a d)$ and the worst case decay rate of pollution $v^*=a$. As shown in the expression of $\rho$, the long-run average welfare is independent of the upper value of the decay rate $b$. The reason is that  we consider the problem under the worst case scenario, which means that  the decision maker is risk aversion. Moreover, if the lower value of the decay rate $a$ is greater than $d\sqrt{\gamma}$, the long-run average welfare and the optimal consumption rate have the following forms
$$\rho=2\sqrt{\gamma}-\gamma\frac d a,\ u^*=\gamma,$$
which gives the relation between  the long-run average welfare and the robust decay rate. Meanwhile, it shows that the lowest decay rate is $d\sqrt{\gamma}$ if one always wants to pursue the largest possible consumption rate $\gamma$.
\end{remark}

\section{ {\protect \large Appendix: The approximation of viscosity solution}}
We first recall the definition of sup-convolution and inf-convolution. We say that $w_\varepsilon(x)$ and $w^\varepsilon(x)$ is the inf-convolution and sup-convolution of $w$, respectively,  if
$$w_\varepsilon(x)=\inf_{y\in\mathbb{R}^n}\{w(y)+\frac{|x-y|^2}{2\varepsilon}\},\
w^\varepsilon(x)=\sup_{y\in\mathbb{R}^n}\{w(y)-\frac{|x-y|^2}{2\varepsilon}\},\ x\in\mathbb{R}^n.$$
It is well known that sup-convolution and inf-convolution yield good approximations of the viscosity subsolution and supersolution, respectively (see \cite{CIL1992,JLS1988,LL1986}).  We adapt this property to our framework, and the following statement involving two controls is new. Its proof is rather standard, but for the reader's convenience  we give the proof here.

\begin{lemma}\label{Appendix}
{Suppose Assumptions (H1)-(H2) hold, then we have the following statements:} \\
{\rm (1)} If $(\rho,w)$ is a viscosity supersolution for equation \eqref{pde2} {and $w$ is Lipschitz}, then $(\rho,w_\varepsilon)$ is a viscosity supersolution of the following equation
\begin{equation}\label{equ 052301}
\rho=\sup_{v\in V}\inf_{u\in U}H(x,Dw_\varepsilon(x),D^2w_\varepsilon(x),u,v)-{
k(\varepsilon)(1+|x|)},
\end{equation}
where $k(\varepsilon)\rightarrow 0$, as $\varepsilon\rightarrow 0$. \\
{\rm (2)} If $(\rho,w)$ is a viscosity subsolution of the equation \eqref{pde2} {and $w$ is Lipschitz}, then $(\rho,w^\varepsilon)$ is a viscosity subsolution of the following equation
\begin{equation}\label{equ 022801}
\rho=\sup_{v\in V}\inf_{u\in U}H(x,Dw^\varepsilon(x),D^2w^\varepsilon(x),u,v)+
{k(\varepsilon)(1+|x|)},
\end{equation}
where $k(\varepsilon)\rightarrow 0$, as $\varepsilon\rightarrow 0$.
\end{lemma}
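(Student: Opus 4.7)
The plan is to establish part (1) first; part (2) will then follow by a symmetric argument. Fix a test function $\varphi \in C_b^3(\mathbb{R}^n)$ and suppose $w_\varepsilon - \varphi$ has a local minimum at some $x_0 \in \mathbb{R}^n$; the goal is to derive the supersolution inequality of \eqref{equ 052301} at $x_0$. The core idea is a translation trick: find a point $y_0$ at which the infimum defining $w_\varepsilon(x_0)$ is attained, build a corresponding test function for $w$ centred at $y_0$, and transfer the supersolution property of $w$ back to $w_\varepsilon$ at $x_0$, paying only a small error that is quantifiable in terms of $\varepsilon$.

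First I would pick $y_0$ realizing $w_\varepsilon(x_0) = w(y_0) + |x_0 - y_0|^2/(2\varepsilon)$; its existence is standard since $y \mapsto w(y) + |x_0 - y|^2/(2\varepsilon)$ is coercive (because $w$ has linear growth). Comparing with $y = x_0$ and using the Lipschitz bound $|w(x_0) - w(y_0)| \leq L|x_0 - y_0|$, where $L$ is the Lipschitz constant of $w$, yields the key quantitative estimate $|x_0 - y_0| \leq 2L\varepsilon$. Then I would define
$$\tilde\varphi(y) := \varphi(y + x_0 - y_0) - \varphi(x_0) + w(y_0),$$
which is designed so that $\tilde\varphi(y_0) = w(y_0)$ and $D^k \tilde\varphi(y_0) = D^k \varphi(x_0)$ for $k = 1,2$. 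A short calculation using the local-minimum property of $w_\varepsilon - \varphi$ at $x_0$ together with the pointwise inequality $w_\varepsilon(x) \leq w(y) + |x-y|^2/(2\varepsilon)$ (evaluated at $x = x_0 + y - y_0$) shows that $w - \tilde\varphi$ has a local minimum at $y_0$. Invoking the viscosity supersolution property of $(\rho,w)$ for \eqref{pde2} at $y_0$ with test function $\tilde\varphi$ then gives
$$\rho \;\geq\; \sup_{v \in V}\inf_{u \in U} H\bigl(y_0, D\varphi(x_0), D^2\varphi(x_0), u, v\bigr).$$

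The remaining step is to replace $y_0$ by $x_0$ inside the Hamiltonian. Using (H2), the Lipschitz estimates on $b$, $f$, $\sigma$, together with the linear growth $\|\sigma(x,u,v)\| \leq C(1+|x|)$ implied by (H1)--(H2), one obtains
$$\bigl|H(x_0,p,A,u,v)-H(y_0,p,A,u,v)\bigr| \;\leq\; C\bigl(1+|x_0|+|y_0|\bigr)|x_0-y_0|\bigl(1+|p|+\|A\|\bigr),$$
uniformly in $(u,v) \in U \times V$. Since $\varphi \in C_b^3$, the quantities $|D\varphi(x_0)|$ and $\|D^2\varphi(x_0)\|$ are bounded by a constant $M_\varphi$; combined with $|y_0| \leq |x_0|+2L\varepsilon$ and $|x_0 - y_0| \leq 2L\varepsilon$, this forces the difference to be bounded by $C_\varphi\,\varepsilon\,(1+|x_0|)$. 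Pushing this uniform-in-$(u,v)$ bound through $\sup_v\inf_u$ and setting $k(\varepsilon) := C_\varphi\,\varepsilon$ produces
$$\rho \;\geq\; \sup_{v}\inf_{u} H\bigl(x_0,D\varphi(x_0),D^2\varphi(x_0),u,v\bigr) - k(\varepsilon)(1+|x_0|),$$
which is exactly the supersolution property for \eqref{equ 052301}.

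For part (2) I would run the mirror argument: take $y_0$ attaining the supremum in $w^\varepsilon(x_0) = \sup_y\{w(y) - |x_0-y|^2/(2\varepsilon)\}$, define $\tilde\varphi$ by the same translation, verify that $w - \tilde\varphi$ has a local \emph{maximum} at $y_0$, and then invoke the subsolution property of $w$ in place of the supersolution property, reversing all inequalities. The main technical subtlety is the absence of the boundedness hypothesis (H4) on $\sigma$: this is what forces the $(1+|x|)$ weight in the error term and requires the careful bookkeeping $|y_0| \leq |x_0| + 2L\varepsilon$ when bounding the Hamiltonian difference. Once that is handled, the remainder of the argument is standard viscosity-solution manipulation.
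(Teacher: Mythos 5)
Your argument is correct and follows essentially the same route as the paper's proof: locate the point $y_0$ attaining the inf-/sup-convolution, note $|x_0-y_0|\le 2L\varepsilon$ from the Lipschitz bound on $w$, translate the test function so that its derivatives at $y_0$ coincide with those of the original test function at $x_0$, invoke the viscosity property of $w$ at $y_0$, and absorb the change of base point $y_0\to x_0$ in the Hamiltonian into the error $k(\varepsilon)(1+|x|)$ via (H1)--(H2). The only differences are cosmetic (the paper normalises the translated test function by adding $\frac{|x_0-y_0|^2}{2\varepsilon}$ rather than $w(y_0)-\varphi(x_0)$, and proves the subsolution case first), and your explicit observation that the constant in $k(\varepsilon)$ involves the bounds on $D\varphi$ and $D^2\varphi$ is consistent with what is implicit in the paper's $k(\varepsilon)=C\varepsilon$.
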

\begin{proof}
We only prove (2), the proof of (1) is analogous.

Let $\phi\in C_b^3(\mathbb{R}^n)$ be a test function such that $w^\varepsilon-\phi$ has a local maximum at $x_0\in\mathbb{R}^n$.
We can find {$y_0\in\bar{B}(x_0,2M\varepsilon)$} such that
$$w^\varepsilon(x_0)=w(y_0)-\frac{|x_0-y_0|^2}{2\varepsilon},$$
where $M$ is the Lipschitz constant of $w$.
Define $\psi(y)=\phi(x_0-y_0+y)+\frac{|x_0-y_0|^2}{2\varepsilon}$, $y\in\mathbb{R}^n$. Obviously, $\psi\in C_b^3(\mathbb{R}^n)$, and {$w-\psi$ attains its local
maximum at $y_0$}. Since $(\rho,w)$ is a viscosity subsolution of the ergodic HJBI equation (\ref{pde2}) and $D\psi(y_0)=D\phi(x_0)$, $D^2\psi(y_0)=D^2\phi(x_0)$, we get
\begin{equation}\label{equ 2.22}
\begin{aligned}
\rho\leq& \sup_{v\in V}\inf_{u\in U}\{\frac{1}{2}tr\big(\sigma\sigma^*(y_0,u,v)\cdot D^2\psi(y_0)\big)+b(y_0,u,v)\cdot D\psi(y_0)+f(y_0,u,v)\}\\
=& \sup_{v\in V}\inf_{u\in U}\{\frac{1}{2}tr\big(\sigma\sigma^*(y_0,u,v)\cdot D^2\phi(x_0)\big)+b(y_0,u,v)\cdot D\phi(x_0)+f(y_0,u,v)\}.
\end{aligned}
\end{equation}
Using Assumptions (H1)-(H2) and (\ref{equ 2.22}), we have
\begin{equation}\label{equ 2.23}
\rho\leq \sup_{v\in V}\inf_{u\in U}\{\frac{1}{2}tr\big(\sigma\sigma^*(x_0,u,v)\cdot D^2\phi(x_0)\big)+b(x_0,u,v)\cdot D\phi(x_0)+f(x_0,u,v)\}+{k(\varepsilon)(1+|x_0|)},
\end{equation}
where $k(\varepsilon)= C\varepsilon$. Thus, we find that $(\rho, w^\varepsilon)$ is a viscosity subsolution of (\ref{equ 022801}).
\end{proof}
{We remark that the term $k(\varepsilon)(1+|x|)$ in both \eqref{equ 052301} and \eqref{equ 022801} will be changed to $k(\varepsilon)$ if the additional assumption (H4) holds. Indeed, one can easily verify it from \eqref{equ 2.23} combined with assumption (H4).}





 \end{document}